\providecommand{\U}[1]{\protect\rule{.1in}{.1in}}
\newtheorem{theorem}{Theorem}[section]
\newtheorem{corollary}[theorem]{Corollary}
\newtheorem{example}[theorem]{Example}
\newtheorem*{hypothesis_A}{Hypothesis A}
\newtheorem{lemma}[theorem]{Lemma}
\newtheorem{proposition}[theorem]{Proposition}
\newtheorem{remark}[theorem]{Remark}
\begin{document}
\title[bi-Lipschitz versus Analytic equivalence... ]{bi-Lipschitz versus Analytic equivalence of two variable complex quasihomogeneous function-germs}
\author[L. Câmara]{Leonardo Câmara}
\author[A. Fernandes]{Alexandre Fernandes}
\address[Leonardo Câmara]{Departamento de Matem\'atica, Universidade Federal do
Espírito Santo. Av. Fernando Ferrari, 514 - Goiabeiras, Vit\'oria - ES,
Brazil, CEP 29075-910. Email: \texttt{leonardo.camara@ufes.br} }
\address[Alexandre Fernandes]{Departamento de Matem\'atica, Universidade Federal do
Cear\'a, Rua Campus do Pici, s/n, Bloco 914, Pici, 60440-900, Fortaleza-CE,
Brazil. E-mail: \texttt{alex@mat.ufc.br}}
\thanks{The first named author has been partially supported by CAPES/PRAPG grant n%
${{}^o}$
88881.964878/2024-01}
\thanks{The last named author was partially supported by CNPq-Brazil grant n${{}^o}$ 304700/2021-5.}
\subjclass[2020]{(primary) 32S15, 32S05; (secondary) 14B05}
\maketitle

\begin{abstract}
In this paper we address the problem of classifying complex (non-homogeneous) quasihomogeneous
polynomials in two variables under bi-Lipschitz equivalence.
We prove that pairs of such polynomials are (right) bi-Lipschitz equivalent
as function-germs at $0\in\mathbb{C}^{2}$ iff they are analytically equivalent.

\end{abstract}
\tableofcontents







\section{Introduction}

We begin with the classical definition of quasihomogeneous polynomial. Let
$\varpi=(p,q)\in\mathbb{N}^{2}$, $1\leq p\leq q$ and $\gcd(p,q)=1$. We say
that $F\in\mathbb{C}[X,Y]$ is a \emph{quasihomogeneous polynomial} with
weights $\varpi=(p,q)$ and \emph{weighted degree} $\nu\in\mathbb{N}$ if
$F(\lambda^{p}x,\lambda^{q}y)=\lambda^{\nu}F(x,y)$ for all $\lambda
\in\mathbb{C}$. It is well known that such a polynomial may also be
characterized by the existence of $a_{ij}\in\mathbb{C}$ such that
$F(X,Y)=\sum_{pi+qj=\nu}a_{ij}X^{i}Y^{j}.$ 

Let $\operatorname*{Diff}(\mathbb{C}^{2},0)$ denote the set of germs of
bi-analytic diffeomorphisms at $(\mathbb{C}^{2},0)$. We say that two
polynomials $F,G\in\mathbb{C}[X,Y]$ are (right) analytically equivalent as
function-germs at $0\in\mathbb{C}^{2}$ if there is $\Phi\in
\operatorname*{Diff}(\mathbb{C}^{2},0)$ such that $F=G\circ\Phi$. In a
similar way, let $\operatorname*{BiLip}(\mathbb{C}^{2},0)$ denote the set of
germs of bi-Lipschitz homeomorphisms at $(\mathbb{C}^{2},0)$. Then we say that
two polynomials $F,G\in\mathbb{C}[X,Y]$ are (right) bi-Lipschitz equivalent as
function-germs at $0\in\mathbb{C}^{2}$ if there is $\Phi\in
\operatorname*{BiLip}(\mathbb{C}^{2},0)$ such that $F=G\circ\Phi$.

\begin{example}
[1965, H. Whitney]Hassler Whitney proved that the family of homogeneous
polynomials $F_{t}(X,Y)=XY(Y-X)(Y-tX)$; $0<|t|<1/2$ satisfies the following:
$F_{t}$ is (right) analytically equivalent to $F_{s}$ as function-germs at the
origin iff $t=s$. On the other hand, it is well-known that, for any $t,s$,
$F_{t}$ is (right) bi-Lipschitz equivalent to $F_{s}$ as function-germs at the origin. As already mentioned in the paper \cite{KP}, the bi-Lipschitz triviality of that family of function-germs is due to T.-C. Kuo; 
the family of bi-Lipschitz map-germs in $\operatorname*{BiLip}(\mathbb{C}^{2},0)$ which trivializes the $F_t(X,Y)$ can be 
obtained by integrating a vector field introduced by T.-C. Kuo in the paper \cite{K} (see also \cite[Thm. 3.3]{FR}).
\end{example}

In contrast with Whitney's example above, the main result of this paper reads as follows.

\begin{theorem}\label{thm:main}Let $F$ and $G$ be complex (non-homogeneous) quasihomogeneous polynomials with weights $\varpi=(p,q)$ in two complex variables. Then $F$ and $G$ are bi-Lipschitz equivalent iff they are analytically equivalent as function-germs at $0\in\mathbb{C}^2$. 
\end{theorem}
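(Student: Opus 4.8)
The implication ``analytically equivalent $\Rightarrow$ bi-Lipschitz equivalent'' is immediate, since $\operatorname{Diff}(\mathbb{C}^{2},0)\subset\operatorname{BiLip}(\mathbb{C}^{2},0)$; the content is the converse, and the plan is to extract from a bi-Lipschitz equivalence a complete set of analytic invariants. First I would fix normal forms. Writing $\varpi=(p,q)$ with $p<q$ (the homogeneous case $p=q$ being excluded), every such $F$ factors as $F=c\,X^{a}Y^{b}\prod_{k=1}^{r}(Y^{p}-\alpha_{k}X^{q})^{m_{k}}$ with $c\neq0$, $a,b\ge0$, $m_{k}\ge1$ and the $\alpha_{k}\in\mathbb{C}^{\ast}$ distinct (each factor $Y^{p}-\alpha X^{q}$ being irreducible in $\mathbb{C}[X,Y]$); let $\nu$ be the weighted degree. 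Substituting $W=Y/X^{q/p}$ (in the weighted chart, i.e.\ after the $p$-fold cover $X=T^{p}$) one gets $F(X,WX^{q/p})=X^{\nu/p}\phi_{F}(W)$ with $\phi_{F}(W)=c\,W^{b}\prod(W^{p}-\alpha_{k})^{m_{k}}$; this is the restriction of (the total transform of) $F$ to the exceptional line $\mathbb{P}(p,q)$ of the weighted blow-up of weight $\varpi$. I would then record that the divisor $D_{F}:=a[\infty]+b[0]+\sum_{k}m_{k}[\alpha_{k}]$ on $\mathbb{P}(p,q)$ determines the analytic class: if $D_{F}=L_{\ast}D_{G}$ for some $L\in\operatorname{Aut}(\mathbb{P}(p,q))$ — the group of scalings $\alpha\mapsto\lambda\alpha$ when $p\ge2$, and the affine group $\alpha\mapsto\lambda\alpha+\mu$ when $p=1$ — then lifting $L$ to a weighted-homogeneous automorphism of $(\mathbb{C}^{2},0)$ and correcting by a weighted scaling produces $\Phi\in\operatorname{Diff}(\mathbb{C}^{2},0)$ with $F=G\circ\Phi$. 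Thus it suffices to prove: $F$ bi-Lipschitz equivalent to $G$ $\Longrightarrow$ $D_{F}$ and $D_{G}$ are $\operatorname{Aut}(\mathbb{P}(p,q))$-equivalent.

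So let $\Phi\in\operatorname{BiLip}(\mathbb{C}^{2},0)$ with $F=G\circ\Phi$. Since $\Phi$ is in particular a homeomorphism-germ and $F^{-1}(0)=\Phi^{-1}(G^{-1}(0))$, the curve germs $F^{-1}(0)$ and $G^{-1}(0)$ are embedded topologically equivalent; reading off multiplicities, characteristic pairs and pairwise intersection multiplicities recovers $p$, $q$, the number $r$ of branches $\{Y^{p}=\alpha_{k}X^{q}\}$, and whether the axes $\{X=0\}$, $\{Y=0\}$ occur. The multiplicities themselves are bi-Lipschitz invariants: for an analytic arc $\gamma$ with $|\gamma(s)|\asymp s$ one has $\operatorname{ord}_{s}F(\gamma(s))=\operatorname{ord}_{s}G(\Phi\circ\gamma(s))$ while $|\Phi\circ\gamma(s)|\asymp s$, so evaluating these orders along arcs hugging each branch (resp.\ each axis) at a prescribed contact order recovers $m_{k}$ (resp.\ $b$, and $a$ from arcs hugging $\{X=0\}$ transversally) and $\nu$ — equivalently, $a$, $b$ and the $m_{k}$ will fall out of the comparison of $\phi_{F}$ and $\phi_{G}$ below. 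What then remains is the \emph{position} of the points $[\alpha_{k}]\in\mathbb{P}(p,q)$.

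The heart of the matter — and the place where the hypothesis $p<q$ is indispensable — is that, in contrast with the homogeneous case, $\Phi$ is forced to respect the \emph{horn} $H=\{|Y|\le K|X|^{q/p}\}$ which contains $F^{-1}(0)$: $\Phi(H)$ is comparable to the corresponding horn for $G$, $|X\circ\Phi|\asymp|X|$, and (when $a>0$) $\Phi(\{X=0\})=\{X=0\}$, the unique branch transverse to the others. In the weighted-spherical coordinates $(r,\theta,u)=(|X|,\arg X,\;Y/|X|^{q/p})$ the Euclidean metric on $H$ is bi-Lipschitz to $dr^{2}+r^{2}d\theta^{2}+r^{2q/p}\,|du|^{2}$, which \emph{separates scales}, the transverse slice at level $r$ having size $r^{q/p}\ll r$. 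A short computation with this metric shows that $\Phi$, after renormalizing each transverse slice by $|X|^{q/p}$, induces for fixed $\theta$ a family of uniformly bi-Lipschitz self-maps $u\mapsto u'$ of the unit disk; by Arzel\`a--Ascoli a subsequence $r\to0$ produces a bi-Lipschitz homeomorphism $W$ of $\mathbb{C}$, analytic (hence asymptotically affine) near $\infty$, for which $F=G\circ\Phi$ passes to the limit as $\phi_{F}=\kappa\cdot\phi_{G}\circ W$ for some nonzero constant $\kappa$ (with the roots of $\phi_F$, $\phi_G$ possibly rotated by a common phase, which does not affect their $\operatorname{Aut}(\mathbb{P}(p,q))$-class); when $p\ge2$, using that $\Phi$ preserves the monodromy of the branches around $\{X=0\}$, one descends further to the quotient cone of angle $2\pi/p$, so that $W$ fixes $0$.

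Finally I would show that the relation $\phi_{F}=\kappa\cdot\phi_{G}\circ W$, with $W$ a bi-Lipschitz homeomorphism of $\mathbb{P}^{1}$ fixing $\infty$ (and $0$ when $p\ge2$) and asymptotically linear at $\infty$, forces $W\in\operatorname{Aut}(\mathbb{P}(p,q))$, whence $D_{F}$ and $D_{G}$ are $\operatorname{Aut}(\mathbb{P}(p,q))$-equivalent and, by the reduction above, $F$ and $G$ are analytically equivalent. Since $W$ is an honest homeomorphism it conjugates the polynomial branched covers $\kappa\phi_{G}$ and $\phi_{F}$; being bi-Lipschitz it preserves orders of vanishing, hence carries the roots of $\phi_{F}$ (with multiplicities) to those of $\phi_{G}$ and the critical points to the critical points, matching critical values up to the scalar $\kappa$. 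Together with the special shape $\phi=c\,W^{b}\prod(W^{p}-\alpha_{k})^{m_{k}}$ and the asymptotic linearity of $W$ at $\infty$, this over-determines $W$ and pins it down to an automorphism of $\mathbb{P}(p,q)$ — whereas in the homogeneous situation the analogous $\Phi$ need not preserve any horn and $W$ may be a genuinely non-affine bi-Lipschitz conjugacy, which is precisely the mechanism behind Whitney's example. I expect the two genuine obstacles to be: (i) the geometric rigidity of the previous step, namely that a bi-Lipschitz self-germ of $\mathbb{C}^{2}$ must preserve the horn and the $|X|$-scale (a statement which fails without $p<q$); and (ii) this last step, promoting the bi-Lipschitz conjugacy $W$ to an analytic (affine) one.
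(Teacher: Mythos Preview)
Your overall architecture is the paper's: rescale by the weighted action to extract, via Arzel\`a--Ascoli, a Lipschitz conjugacy $W$ between the one-variable height polynomials $\phi_F,\phi_G$, and then argue that $W$ is affine, which pins down the analytic class. The genuine gap is at your step (ii). The argument you propose --- $W$ matches roots-with-multiplicities and critical points, is asymptotically linear at $\infty$, hence is ``over-determined'' --- is not a proof: these constraints do not force a bi-Lipschitz homeomorphism of $\mathbb{C}$ to be affine, and your aside that $W$ is ``analytic near $\infty$'' is nowhere justified. The paper's resolution is short and is exactly the missing idea. From $\phi_F=\kappa\,\phi_G\circ W$ with $\phi_F,\phi_G$ non-constant analytic and $W$ merely Lipschitz, one first observes $m(\phi_G,W(z))\le m(\phi_F,z)$ (compare orders of vanishing through the Lipschitz inequality), so $W$ sends regular points of $\phi_F$ to regular points of $\phi_G$; there one locally inverts $\phi_G$ and writes $W=\phi_G^{-1}\circ(\kappa^{-1}\phi_F)$, so $W$ is analytic off the discrete critical set of $\phi_F$; Riemann's removable singularity theorem makes $W$ entire; and an entire Lipschitz function is affine by Liouville. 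No bi-Lipschitz hypothesis on $W$ is needed --- which is just as well, since only Lipschitz survives the Arzel\`a--Ascoli limit of the rescaled $\Phi_2$.

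For your step (i) the paper takes a different and more elementary route than your horn-metric picture: it verifies the boundedness of $\Phi_2(s^p,s^qz_0)/s^q$ at a \emph{single} root $z_0$ of $f$ by applying Cauchy's bound on polynomial roots to the equation $0=G(\Phi_1/s^p,\Phi_2/s^q)$ viewed as a polynomial in the second slot (the first slot being bounded since $\Phi_1$ is Lipschitz); this suffices to launch Arzel\`a--Ascoli, and the nonvanishing of the limiting $X$-scale follows from $|\Phi(s^p,0)|\asymp s^p$ combined with $|\Phi_2(s^p,0)|\lesssim s^q$. Two smaller divergences: the paper recovers the exponent of $X$ from the bi-Lipschitz invariance of $\operatorname{ord}_0F$ rather than from the topology of $F^{-1}(0)$; and for $p>1$ it obtains linearity of the affine map from the $p$-th-root-of-unity symmetry of the root set of the height function (an affine map commuting with that rotation must fix $0$), not from a monodromy argument.
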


It is worth noting that Theorem \ref{thm:main} confirms some previous works on the rigidity of bi-Lipschitz equivalence compared to the analytic equivalence of quasihomogeneous function-germs in two variables, e.g. \cite{HP1, HP2, BFP, KP, FR2, Alvarez2020, CamRuas2022}.

\section{Quasihomogeneous polynomials in two variables}

Let $\varpi=(p,q)\in\mathbb{N}^{2}$, $1\leq p\leq q$ and $\gcd(p,q)=1$. We say that
$F\in\mathbb{C}[X,Y]$ is a \emph{quasihomogeneous polynomial} with weights
$\varpi=(p,q)$ and \emph{weighted degree} $\nu\in\mathbb{N}$ if $F(\lambda
^{p}x,\lambda^{q}y)=\lambda^{\nu}F(x,y)$ for all $\lambda\in\mathbb{C}$. It is
well known that such a polynomial may also be characterized by the existence
of $a_{ij}\in\mathbb{C}$ such that
\begin{equation}
F(X,Y)=\sum_{pi+qj=\nu}a_{ij}X^{i}Y^{j}. \label{eq:qhpol_loc_coord}%
\end{equation}


Given a quasihomogeneous polynomial $F$ with weights $\varpi=(p,q)$, (cf. \cite{CamSca2018} and
\cite{CamRuas2022}) recall that $F$ can be written in the form 
\[
F(X,Y)=c_0X^{m}Y^{m_0}{\textstyle\prod\nolimits_{j=1}^{k}}
(Y^{p}-\lambda_{j}X^{q})^{m_j},
\]
where $c_0\in \mathbb{C}^*, \lambda_1,\dots,\lambda_k\in\mathbb{C}$ are pairwise different, and $m_0,m_1,\dots,m_k$ are non-negative integer numbers. 

\noindent{\bf\textcolor{red}{Caveat}}: All quasihomogeneous polynomials $F(X,Y)$ with weights $\varpi=(p,q)$ considered here are non-homogeneous in the sense that $p<q$ and they satisfy the following conditions:
\begin{itemize}
	\item In case $p=1$, we say that $F(X,Y)$ is non-homogeneous if it is not of the form $cX^m(Y-\lambda X^q)^n$. Let us remark that $cX^m(Y-\lambda X^q)^n$ is analytically equivalent to the homogeneous polynomial $cX^mY^n$.
	\item In case $p>1$, we say that $F(X,Y)$ is non-homogeneous if it is not of the form $cX^mY^n$.
\end{itemize}  
\noindent{\bf Height function.} Given a quasihomogeneous polynomial $F(X,Y)$ with  weights $\varpi=(p,q)$. The one variable function $f(z)=F(1,z)$, $z\in\mathbb{C}$, is the so-called {\it height function of $F$}.

\section{On Lipschitz equivalence of one variable complex analytic functions and automorphisms of $\mathbb{C}$}

A function $\phi\colon\mathbb{C}\to\mathbb{C}$ is called an {\it automorphism of $\mathbb{C}$} if there exist $a,b\in\mathbb{C}$, $a\neq 0$, such that $\phi(z)=az+b$ $\forall z\in\mathbb{C}$. The automorphism $\phi$ is called a {\it linear automorphism} if $\phi(0)=0$. 

Let us start this section by addressing the following result. 

\begin{proposition}
	\label{prop:one_variable} Let $f,g\colon\mathbb{C}\to\mathbb{C}$ be two
	non-constant analytic functions and $\phi\colon\mathbb{C}\to\mathbb{C}$ be a
	Lipschitz function (not necessarily bi-Lipschitz) such that $f(z) = g\circ\phi(z)$
	$\forall z\in\mathbb{C}$, then $\phi$ is an automorphism of $\mathbb{C}$.
\end{proposition}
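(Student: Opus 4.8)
The plan is to show first that $\phi$ — a priori only Lipschitz as a map $\mathbb{R}^2\to\mathbb{R}^2$ — is in fact an entire holomorphic function, and then to exploit the linear growth forced by the Lipschitz condition to conclude it is affine.

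\textbf{Step 1: $\phi$ is holomorphic.} I would start from Rademacher's theorem: since $\phi$ is Lipschitz with some constant $L$, it is real-differentiable at almost every $z_0\in\mathbb{C}$, and there the real differential $D\phi(z_0)$ has operator norm $\le L$. Fix such a $z_0$ and set $w_0=\phi(z_0)$. As $g$ is entire it is complex-differentiable at $w_0$; using the Lipschitz estimate $|\phi(z)-w_0|\le L|z-z_0|$ to absorb the inner error term (so that $o(|\phi(z)-w_0|)=o(|z-z_0|)$), the chain rule gives that $f=g\circ\phi$ is differentiable at $z_0$ with $Df(z_0)=g'(w_0)\,D\phi(z_0)$ as real-linear maps. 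But $f$ is holomorphic, so $Df(z_0)$ is $\mathbb{C}$-linear (multiplication by $f'(z_0)$). Since $f$ is non-constant, $f'\not\equiv0$, so $\{f'=0\}$ is discrete and in particular null; hence for a.e.\ $z_0$ we have $f'(z_0)\ne0$, which forces $g'(w_0)\ne0$, and then $D\phi(z_0)=g'(w_0)^{-1}Df(z_0)$ is $\mathbb{C}$-linear, that is $\partial_{\bar z}\phi(z_0)=0$. Thus $\partial_{\bar z}\phi=0$ almost everywhere, hence in the sense of distributions (for a Lipschitz function the distributional first derivatives are the $L^\infty$ functions given a.e.\ by the classical ones). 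By Weyl's lemma / hypoellipticity of $\bar\partial$ — equivalently, a continuous, locally $W^{1,1}$ function annihilated by $\partial_{\bar z}$ is holomorphic — $\phi$ is holomorphic on all of $\mathbb{C}$.

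\textbf{Step 2: $\phi$ is affine.} Now $\phi$ is entire and satisfies $|\phi(z)|\le L|z|+|\phi(0)|$, so Cauchy's estimates on circles $|z|=R$ give $|\phi^{(n)}(0)|\le n!\,(LR+|\phi(0)|)/R^n\to0$ as $R\to\infty$ for every $n\ge2$. Hence $\phi(z)=az+b$ for some $a,b\in\mathbb{C}$. Finally, if $a=0$ then $\phi$ is constant, so $f\equiv g(b)$ would be constant, contradicting the hypothesis that $f$ is non-constant; therefore $a\ne0$ and $\phi$ is an automorphism of $\mathbb{C}$, as claimed.

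\textbf{Main obstacle.} The substantive part is Step 1. The two points needing care are: (i) the chain rule for the composite of the holomorphic $g$ with the merely Lipschitz $\phi$, which is legitimate precisely because the Lipschitz bound turns the inner increment's error into $o(|z-z_0|)$; and (ii) discarding the set where $g'(\phi(z))$ vanishes, which has measure zero because there $f'$ vanishes and $\{f'=0\}$ is discrete. Once $\partial_{\bar z}\phi=0$ a.e.\ is established, the passage to genuine holomorphy is the standard regularity statement for Lipschitz (Sobolev) solutions of the Cauchy--Riemann equation, and Step 2 is then routine.
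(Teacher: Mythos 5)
Your proof is correct, but it follows a genuinely different route from the paper's for the key step (holomorphy of $\phi$). The paper first proves a small multiplicity lemma ($m(g,\phi(z))\le m(f,z)$, so $\phi$ sends regular points of $f$ to regular points of $g$), then inverts $g$ locally near such regular values to write $\phi=(g|_U)^{-1}\circ f$ off the discrete critical set $\Sigma_f$, and finally removes these singularities by Riemann's theorem using the continuity of $\phi$; everything stays inside elementary one-variable complex analysis. You instead argue measure-theoretically: Rademacher gives a.e.\ real differentiability, the chain rule $Df(z_0)=g'(\phi(z_0))D\phi(z_0)$ combined with the discreteness of $\{f'=0\}$ forces $\partial_{\bar z}\phi=0$ a.e., and since for a Lipschitz map the a.e.\ pointwise derivatives coincide with the distributional ones, hypoellipticity of $\bar\partial$ (Weyl's lemma) yields holomorphy. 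Your version dispenses with the multiplicity lemma and with the local inversion of $g$, at the cost of invoking Rademacher and elliptic regularity — heavier but standard tools; you correctly flag the one genuine pitfall (a.e.\ vanishing of $\partial_{\bar z}\phi$ alone would not suffice without the Sobolev identification, cf.\ Cantor-type examples), and your handling of it is right. A minor remark: the Lipschitz bound is not actually needed to justify the chain rule, since differentiability of $\phi$ at $z_0$ already gives $|\phi(z)-\phi(z_0)|=O(|z-z_0|)$ there. Your Step 2 (Cauchy estimates on an entire function of linear growth, then $a\neq0$ from $f$ non-constant) is essentially identical to the paper's concluding Liouville-type argument.
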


\noindent\textbf{Notation}. Given two functions $f,g\colon\mathbb{C}%
\rightarrow\mathbb{C}$, $\left\vert f(z)\right\vert \lesssim\left\vert
g(z)\right\vert $ as $z\rightarrow z_{0}$ means that there exist a positive
real number $C$ and a neighborhood $U\subset\mathbb{C}$ of $z_{0}$ such that
$\left\vert f(z)\right\vert \leq C\left\vert g(z)\right\vert $ for any $z\in
U$. Furthermore, $\left\vert f(z)\right\vert \approx\left\vert g(z)\right\vert
$ as $z\rightarrow z_{0}$ means that $\left\vert f(z)\right\vert
\lesssim\left\vert g(z)\right\vert $ and $\left\vert g(z)\right\vert
\lesssim\left\vert f(z)\right\vert $ as $z\rightarrow z_{0}$.

Before we begin the proof of Proposition \ref{prop:one_variable}, let us
recall the basic concept of multiplicity and regular points.

\noindent{\textbf{Multiplicity}}. Let $h\colon\mathbb{C}\rightarrow\mathbb{C}$
be a non-constant analytic function. Given $z_{0}\in\mathbb{C}$, there exists
a unique integer $m\geq1$ such that $\left\vert h(z)-h(z_{0}%
)\right\vert \approx\left\vert z-z_{0}\right\vert ^{m}$ as $z\rightarrow
z_{0}$. This integer $m$ is called the {\it multiplicity} of $h$ at
$z_{0}$ and denoted by $m(h,z_{0})$. With this nomenclature at hand, we see that the derivative $h'$ does not vanish at $z_0\in\mathbb{C}$ iff $m(h,z_0)=1$. The points satisfying this last condition are called {\it regular}; the non regular points are also known as {\it critical} points.

\begin{lemma}
	Let $\phi\colon\mathbb{C}\rightarrow\mathbb{C}$ be a Lipschitz function such that
	$f(z)=g\circ\phi(z)$ $\forall z\in\mathbb{C}$, then $m(g,\phi(z))\leq m(f,z)$
	$\forall z\in\mathbb{C}$. In particular, $\phi$ maps regular points of $f$ on
	regular points of $g$.
\end{lemma}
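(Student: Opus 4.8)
The plan is to work locally near an arbitrary point $z_0 \in \mathbb{C}$ and compare the local behavior of both sides of the identity $f(z) = g(\phi(z))$ using the multiplicity estimates recalled above. Set $w_0 = \phi(z_0)$, $m = m(f, z_0)$ and $n = m(g, w_0)$; I want to show $n \le m$. The key input is the characterization $|h(z) - h(z_0)| \approx |z - z_0|^{m(h,z_0)}$ as $z \to z_0$, applied to $f$ at $z_0$ and to $g$ at $w_0$, together with the Lipschitz bound $|\phi(z) - \phi(z_0)| \le L|z - z_0|$ valid for all $z$.

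First I would write, for $z$ near $z_0$,
\[
|f(z) - f(z_0)| = |g(\phi(z)) - g(\phi(z_0))| \approx |\phi(z) - \phi(z_0)|^{n}
\]
as $z \to z_0$, where the second estimate uses that $\phi(z) \to w_0$ and the local multiplicity description of $g$ at $w_0$; note this is an \emph{upper} bound $|f(z) - f(z_0)| \lesssim |\phi(z)-\phi(z_0)|^n$ that is all I need. Combining with the Lipschitz estimate $|\phi(z) - \phi(z_0)| \le L |z - z_0|$ gives
\[
|f(z) - f(z_0)| \lesssim |z - z_0|^{n}
\]
as $z \to z_0$. On the other hand, $|f(z) - f(z_0)| \approx |z - z_0|^m$ as $z \to z_0$, so in particular $|z-z_0|^m \lesssim |z-z_0|^n$ as $z\to z_0$. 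Since this holds on a punctured neighborhood of $z_0$, letting $z \to z_0$ forces $n \le m$, i.e. $m(g, \phi(z_0)) \le m(f, z_0)$, which is the desired inequality (the point $z_0$ was arbitrary). The ``in particular'' statement follows immediately: if $z_0$ is regular for $f$ then $m(f,z_0) = 1$, hence $1 \le m(g, \phi(z_0)) \le 1$, so $m(g, \phi(z_0)) = 1$ and $\phi(z_0)$ is regular for $g$.

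The only subtle point — and the one I would be careful about — is justifying the estimate $|g(\phi(z)) - g(w_0)| \approx |\phi(z) - w_0|^n$: the relation $|g(w) - g(w_0)| \approx |w - w_0|^n$ holds for $w$ in some neighborhood $V$ of $w_0$, and one must ensure $\phi(z) \in V$ for $z$ close enough to $z_0$, which is guaranteed by continuity (indeed Lipschitz continuity) of $\phi$ at $z_0$. A minor degenerate case to dispatch: if $\phi$ is constant equal to $w_0$ on a neighborhood of $z_0$, then $f$ is constant near $z_0$, contradicting that $f$ is non-constant analytic — alternatively one simply observes the argument above still goes through since then $|\phi(z)-\phi(z_0)| = 0 \lesssim |z-z_0|^n$ trivially and $f\equiv f(z_0)$ locally is impossible, so this case does not occur. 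No real obstacle remains beyond this bookkeeping.
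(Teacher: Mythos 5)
Your argument is correct and coincides with the paper's own proof: both chain $|z-z_0|^{m(f,z_0)}\approx|f(z)-f(z_0)|=|g(\phi(z))-g(\phi(z_0))|\approx|\phi(z)-\phi(z_0)|^{m(g,\phi(z_0))}\lesssim|z-z_0|^{m(g,\phi(z_0))}$ and compare exponents as $z\to z_0$. Your extra remarks on continuity of $\phi$ and the constant case are fine but not needed beyond what the paper does.
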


\begin{proof} Given $z_{0}\in\mathbb{C}$, then%
	\begin{align*}
	|z-z_{0}|^{m(f,z_{0})}  &  \approx|f(z)-f(z_{0})|\\
	&  =|g(\phi(z))-g(\phi(z_{0}))|\\
	&  \approx|\phi(z)-\phi(z_{0})|^{m(g,\phi(z_{0}))}\\
	&  \lesssim|z-z_{0}|^{m(g,\phi(z_{0}))}\ \mbox{as}\ z\rightarrow z_{0};
	\end{align*}
	Therefore, $m(g,\phi(z_{0}))\leq m(f,z_{0})$.
\end{proof}

\begin{proof}
	[Proof of Proposition \ref{prop:one_variable}]Let us denote by $\Sigma
	_{f}\subset\mathbb{C}$ the set of the critical points of $f$. We claim that
	$\phi\colon\mathbb{C}\setminus\Sigma_{f}\rightarrow\mathbb{C}$ is an analytic
	function. In fact, since $\phi(z)$ is a regular point of $g$ for all
	$z\in\mathbb{C}\setminus\Sigma_{f}$, it follows from the Implicit Function
	Theorem that there exist neighborhoods $U\subset\mathbb{C}$ of $\phi(z)$ and
	$V\subset\mathbb{C}$ of $g(\phi(z))$ such that $\left.  g\right\vert
	_{U}\colon U\rightarrow V$ is bi-analytic. Since $\phi$ is a continuous
	function, there is a neighborhood $W\subset\mathbb{C}$ of $z$ such that
	$\phi(W)\subset U$. Then $\left.  \phi\right\vert _{W}=(\left.  g\right\vert
	_{U})^{-1}\circ f$; in particular, $\left.  \phi\right\vert _{\mathbb{C}%
		\setminus\Sigma_{f}}$ is an analytic function.
	
	Once we have proved that $\phi\colon\mathbb{C}\setminus\Sigma_{f}%
	\rightarrow\mathbb{C}$ is an analytic function and $\Sigma_{f}\subset
	\mathbb{C}$ is a discrete subset, then it follows from Riemann's Removable
	Singularity Theorem that $\phi\colon\mathbb{C}\rightarrow\mathbb{C}$ is an
	analytic function.
	
	Finally, since $\phi\colon\mathbb{C}\rightarrow\mathbb{C}$ is a globally
	Lipschitz analytic function, it comes from Liouville's Theorem that $\phi$ is
	an affine function, i.e. there exists $a,b\in\mathbb{C}$ such that
	$\phi(z)=az+b$ $\forall z\in\mathbb{C}$; since $f$ is non-constant, then
	$a\neq0$.
\end{proof}

At this point,  it is natural to consider the following equivalence relation on the sets of one variable complex functions: we say that two functions $f,g\colon\mathbb{C}\rightarrow\mathbb{C}$ are \emph{equivalent under automorphisms of $\mathbb{C}$} if there exist a non-zero constant $\sigma\in\mathbb{C}^*$ and an automorphism $\phi\colon\mathbb{C}\rightarrow \mathbb{C}$ such that $\sigma f=g\circ\phi.$ In this case, if the automorphism $\phi$ is linear we say $f$ and $g$ are \emph{equivalent under linear automorphisms of $\mathbb{C}$}.

In the following, we are going to encode the equivalence classes of polynomials by using their multisets of zeros. First, let us recall the notion of multisets of zeros of one complex variable polynomials.

\noindent{\bf Multiset of zeros}. Given $f(z) = a_n z^n + \cdots + a_0$ (with $a_n \neq 0$), factorize it as
$$f(z) = a_n (z - r_1)^{m_1} (z - r_2)^{m_2} \cdots (z - r_k)^{m_k},$$
where $r_1, r_2, \ldots, r_k$ are the distinct roots and $m_i$ is the multiplicity of $r_i$. 

Thus, given a non-zero complex number $c$, the roots of $cf(z)$ and their respective multiplicities remain exactly the same as $f$, only the leading coefficient changes to $c a_n$. So, the multiset of zeros 
$$\{r_j : m_j\}_{j=1}^k=\{r_1 : m_1, r_2 : m_2, \ldots, r_k : m_k\}$$ is invariant across the equivalence class. 

Actually, we have shown that   $g(z)$ is equal to $cf(z)$ for some $c\in\mathbb{C}^*$ iff $g(z)$ and $f(z)$ have the same multiset of  zeros. Hence, we easily arrive at the following result.

\begin{proposition} Let $f(z)$ be a non-constant complex polynomial with multiset of zeros $\{r_j : m_j\}_{j=1}^k$. Then the following statements hold true:
	\begin{enumerate}
		\item $g(z)$ and $f(z)$ are equivalent under automorphisms of $\mathbb{C}$ if and only if $\exists a\neq 0,b$ in $\mathbb{C}$ such that the multisets of zeros of $g(z)$ is equal to $\{ar_j+b : m_j\}_{j=1}^k$. In particular, $g(z)$ and $f(z)$ have the same degree.
		\item $g(z)$ and $f(z)$ are equivalent under linear automorphisms of $\mathbb{C}$ if and only if $\exists a\neq 0$ in $\mathbb{C}$ such that the multisets of zeros of $g(z)$ is equal to $\{ar_j : m_j\}_{j=1}^k$. In particular, $g(z)$ and $f(z)$ have the same degree
	\end{enumerate}
\end{proposition}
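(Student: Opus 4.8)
The plan is to derive both statements directly from the earlier characterization: $g(z) = c\,f(z)$ for some $c \in \mathbb{C}^*$ if and only if $f$ and $g$ have the same multiset of zeros. I would first handle statement (1). Suppose $\sigma f = g \circ \phi$ with $\sigma \in \mathbb{C}^*$ and $\phi(z) = az + b$, $a \neq 0$. Write the factorization $f(z) = a_n \prod_{j=1}^k (z - r_j)^{m_j}$. Then $\sigma f(z) = \sigma a_n \prod_{j=1}^k (z - r_j)^{m_j}$, while $(g\circ\phi)(z) = g(az+b)$. Substituting $w = az + b$, i.e. $z = (w-b)/a$, one sees that the zeros of $g\circ\phi$ are exactly the points $z$ with $az + b$ a zero of $g$; equivalently, if $g$ has multiset of zeros $\{s_\ell : n_\ell\}$, then $g \circ \phi$ has multiset of zeros $\{(s_\ell - b)/a : n_\ell\}$, with the same multiplicities because the affine change of variable is a bijection that is biholomorphic and hence preserves vanishing orders. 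Since $\sigma f = g \circ \phi$, these two polynomials have the same multiset of zeros, so $\{r_j : m_j\} = \{(s_\ell - b)/a : n_\ell\}$, which rearranges to say the multiset of zeros of $g$ is $\{a r_j + b : m_j\}_{j=1}^k$. Conversely, if the multiset of zeros of $g$ equals $\{ar_j + b : m_j\}_{j=1}^k$, then setting $\phi(z) = az + b$ one checks that $g \circ \phi$ has multiset of zeros $\{r_j : m_j\}_{j=1}^k$, the same as $f$; by the quoted characterization, $g \circ \phi = c f$ for some $c \in \mathbb{C}^*$, so with $\sigma = c$ we get $\sigma f = g \circ \phi$, i.e. $f$ and $g$ are equivalent under automorphisms of $\mathbb{C}$. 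The ``same degree'' assertion is then immediate: both multisets have $\sum m_j$ elements counted with multiplicity, which is the common degree.

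Statement (2) is the special case $b = 0$: an automorphism $\phi(z) = az + b$ is linear precisely when $\phi(0) = 0$, i.e. $b = 0$. Running the same argument with $b = 0$ throughout gives that $f$ and $g$ are equivalent under linear automorphisms iff the multiset of zeros of $g$ is $\{a r_j : m_j\}_{j=1}^k$ for some $a \neq 0$, and again the degrees agree.

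The only point requiring a little care — and the nearest thing to an obstacle — is the bookkeeping for how an affine substitution transforms a multiset of zeros, specifically that multiplicities are preserved. This follows because $z \mapsto az+b$ is a biholomorphism of $\mathbb{C}$, so the order of vanishing of $g$ at $s_\ell$ equals the order of vanishing of $g \circ \phi$ at $\phi^{-1}(s_\ell) = (s_\ell - b)/a$; equivalently one can simply expand the factorization $g(w) = b_N \prod_\ell (w - s_\ell)^{n_\ell}$, substitute $w = az+b$, and pull out the factors of $a$ into the leading constant, leaving $\prod_\ell (z - (s_\ell-b)/a)^{n_\ell}$ with unchanged exponents. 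Once this is in hand, both parts of the proposition reduce to the already-established equivalence ``$g = cf \iff$ same multiset of zeros,'' and there is nothing further to prove.
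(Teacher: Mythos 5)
Your proof is correct and follows exactly the route the paper intends: the paper states this proposition as an easy consequence of the preceding observation that $g=cf$ for some $c\in\mathbb{C}^*$ iff $f$ and $g$ share the same multiset of zeros, and your argument simply writes out the (correct) bookkeeping of how an affine substitution $z\mapsto az+b$ transforms that multiset while preserving multiplicities. No gaps; nothing further needed.
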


\section{Weighted blow-up of a function-germ\label{section:blow-up_section}}

Here we introduce one of the main ingredients for the proof of Theorem \ref{thm:main},  the notion of weighted blow-up of a Lipschitz function vanishing at the origin. Somehow, we emulate a blow-up of Lipschitz mappings at the origin presented in \cite{S}.

Let $\psi\colon\mathbb{C}^{2}\rightarrow\mathbb{C}$ be a $c$-Lipschitz
function, i.e. a Lipschitz function with Lipschitz constant $c>0$ such that
$\psi(0,0)=0$. Let $\varpi=(p,q)$ be a pair of relatively prime positive
integer numbers such that $p<q$. Given a sequence of positive real numbers
$\{t_{n}\}_{n\in\mathbb{N}}$ such that $t_{n}\rightarrow0$ and a compact disc
$D\subset\mathbb{C}$, we define a sequence of functions $\phi_{n}\colon
D\rightarrow\mathbb{C}$ by $\phi_{n}(z):=\frac{\psi(t_{n}^{p},t_{n}^{q}%
z)}{t_{n}^{q}}.$

Since $\psi$ is a $c$-Lipschitz function, it is clear that, for each $n\in\mathbb{N}$, $\phi_{n}$ is also a $c$-Lipschitz
function.

\begin{hypothesis_A}
\noindent$\exists\ z_{0}\in\mathbb{C}$ such that $\psi(s^{p},s^{q}%
z_{0})=O(s^{q})$ as $\ s\rightarrow0^{+}$.
\end{hypothesis_A}%

Under the previous hypotheses, this sequence of functions
becomes uniformly bounded. Indeed, let us assume that $\exists M>0$ such that $|\phi_n(z_0)|\leq M$  $\forall n$. Then, given $z\in D$, we obtain
\begin{align*}
|\phi_n(z)| & = |\phi_n(z)-\phi_n(z_0) + \phi_n(z_0)| \\
& \leq |\phi_n(z)-\phi_n(z_0)| + |\phi_n(z_0)| \\
& \leq c|z-z_0| + M.
\end{align*}
Therefore, since $D$ is compact, $\exists K>0$ such that $|\phi_n(z)|\leq K$ $\forall n\in\mathbb{N}$ and $\forall z\in D$.

At this point, we can apply Arzela-Ascoli's Theorem to ensure that there exists a subsequence   of $\{\phi_{n}\}$ that converges uniformly to a function
$\phi\colon D\rightarrow\mathbb{C}$, which is a $c$-Lipschitz function as well.

Based on the observations made in the above paragraphs, we will construct a
weighted blow-up of the function $\psi(x,y)$ at $(0,0)$ as a $c$-Lipschitz
function $\phi\colon\mathbb{C}\rightarrow\mathbb{C}$ as follows: There exists a 
sequence $\{s_{k}\}_{k\in\mathbb{N}}$, $s_k\to 0$, such that for each compact subset
$D\subset\mathbb{C}$, the sequence of functions
\[
z\in D\mapsto\frac{\psi(s_{k}^{p},s_{k}^{q}z)}{s_{k}^{q}}%
\]
converges uniformly to $\left.  \phi\right\vert _{D}\colon D\rightarrow
\mathbb{K}$. Actually, we start by taking $\{t_{n}\}_{n\in F_{1}}$ a
subsequence of $\{t_{n}\}_{n\in\mathbb{N}}$ such that the sequence of
functions
\[
z\mapsto\frac{\psi(t_{n}^{p},t_{n}^{q}z)}{t_{n}^{q}};\ n\in F_{1}%
\]
converges uniformly to a $c$-Lipschitz function ${}^{\left(  1\right)  }%
\!\phi\colon D_{1}\rightarrow\mathbb{C}$, where $D_{k}=\{z\in\mathbb{C}%
\ :\ \left\vert z\right\vert \leq k\}$. By induction on $k\geq1$, one can
construct subsequences $\{t_{n}\}_{n\in F_{k}}$ of $\{t_{n}\}_{n\in\mathbb{N}
}$ such that $\cdots\subset F_{k+1}\subset F_{k}\subset\cdots\subset\mathbb{N}$
and the sequence of functions
\[
z\in D_{k+1}\mapsto\frac{\psi(t_{n}^{p},t_{n}^{q}z)}{t_{n}^{q}};\ n\in F_{k+1}%
\]
converges uniformly to a $c$-Lipschitz function ${}^{(k+1)}\!\phi\colon
D_{k+1}\rightarrow\mathbb{C}$ such that ${}^{(k+1)}\!\left.  \phi\right\vert
_{D_{k}}={}^{\left(  k\right)  }\!\phi$. Now consider the sequence
$\{s_{k}\}_{k\in\mathbb{N}}$ defined for each $k$ by $s_{k}=t_{n_{k}}\in
F_{k}$; $n_{1}<n_{2}<\cdots<n_{k}<\cdots$. Then we define $\phi\colon
\mathbb{C}\rightarrow\mathbb{C}$ by
\[
z\in\mathbb{C}\mapsto\ \phi(z)=\lim_{k\rightarrow\infty}\frac{\psi(s_{k}%
^{p},s_{k}^{q}z)}{s_{k}^{q}}.
\]
This function $\phi$ is called a $\varpi$\emph{-weighted blow-up of }%
$\psi(x,y)$\emph{ at }$(0,0)$ and the sequence $\{s_{k}\}_{k\in\mathbb{N}}$ is
known as an \emph{effective sequence} associated to $\phi$.

\begin{remark}
\label{rem:local_and_bi-Lip} For the sake of simplicity, the notion of
$\varpi$-weighted blow-up was introduced for $c$-Lipschitz functions
$\psi\colon\mathbb{C}^{2}\rightarrow\mathbb{C}$ such that $\psi(0,0)=0$. But
as a matter of fact, since $(s_{k}^{p},s_{k}^{q}z)\rightarrow(0,0)$ as
$s_{k}\rightarrow0$, the same concept may be introduced for function-germs
$\psi\colon\left(  \mathbb{C}^{2},0\right)  \rightarrow\left(  \mathbb{C}%
,0\right)  $ as well.
\end{remark}

\section{Height functions and bi-Lipschitz equivalence}

The main result of this section is the following proposition that shows how
the height functions of two germs of (non-homogeneous) quasihomogeneous
functions that are Lipschitz equivalent are related. 

\begin{proposition}
\label{prop:main proposition} Let $F,G\in\mathbb{C}[X,Y]$ be two
(non-homogeneous) quasihomogeneous polynomials with the same weights
$\varpi=(p,q)$ and weighted degrees $\nu(F)$ and $\nu(G)$, respectively. If $F$ and $G$ are (right) bi-Lipschitz equivalent as function-germs at the origin then $\nu(F)=\nu(G)$ and their height functions $f$ and $g$ are equivalent under automorphisms of $\mathbb{C}$.
\end{proposition}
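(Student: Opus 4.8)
The plan is to feed the bi-Lipschitz homeomorphism $\Phi$ realizing $F = G\circ\Phi$ into the weighted blow-up machinery of Section~\ref{section:blow-up_section}, applied to each component of $\Phi$, and to show that the resulting blow-up limit intertwines the height functions $f$ and $g$ via a Lipschitz map of $\mathbb{C}$; Proposition~\ref{prop:one_variable} then upgrades that Lipschitz map to an automorphism. First I would record the consequence of $\varpi$-homogeneity: writing $F(s^{p},s^{q}z) = s^{\nu(F)}f(z)$ and $G(x,y) = \dots$ similarly, the hypothesis $F = G\circ\Phi$ becomes, along the parabolic arc $s\mapsto(s^{p},s^{q}z)$,
\[
s^{\nu(F)}f(z) \;=\; G\bigl(\Phi_1(s^{p},s^{q}z),\,\Phi_2(s^{p},s^{q}z)\bigr),
\]
where $\Phi = (\Phi_1,\Phi_2)$. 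The bi-Lipschitz property of $\Phi$ (with $\Phi(0)=0$) gives $|\Phi_1(x,y)|\lesssim\|(x,y)\|$ and $|\Phi_2(x,y)|\lesssim\|(x,y)\|$, and along the arc $\|(s^{p},s^{q}z)\|\approx s^{p}$ for small $s$, so $\Phi_1(s^{p},s^{q}z) = O(s^{p})$ and $\Phi_2(s^{p},s^{q}z) = O(s^{p})$. The first coordinate is the ``leading'' one and one expects $\Phi_1(s^{p},s^{q}z)\approx s^{p}$ generically; the correct normalization is to rescale $\Phi_1$ by $s^{p}$ and $\Phi_2$ by $s^{q}$, i.e. to form the $\varpi$-weighted blow-ups $\phi_1(z) = \lim_k \Phi_1(s_k^{p},s_k^{q}z)/s_k^{p}$ and $\phi_2(z) = \lim_k \Phi_2(s_k^{p},s_k^{q}z)/s_k^{q}$ along a common effective sequence $\{s_k\}$. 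Hypothesis~A must be checked for $\Phi_2$ (choosing a point $z_0$ where it holds); if it fails identically one is in a degenerate situation that, using the bi-Lipschitz lower bound $\|(x,y)\|\lesssim\|\Phi(x,y)\|$, one rules out because $\Phi_1(s^{p},s^{q}z)/s^{p}$ cannot also tend to $0$.

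Next I would substitute the blow-up limits into the rescaled identity. Dividing $s^{\nu(F)}f(z) = G(\Phi_1,\Phi_2)$ by $s^{\nu(G)}$ and using that $G$ is $\varpi$-quasihomogeneous of weighted degree $\nu(G)$, the right side equals $s^{\nu(G)}\,G\bigl(\Phi_1(s^{p},s^{q}z)/s^{p},\,\Phi_2(s^{p},s^{q}z)/s^{q}\bigr)/s^{\nu(G)}$ after pulling out the weighted scaling; passing to the limit along $\{s_k\}$ the bracketed argument converges to $(\phi_1(z),\phi_2(z))$, so by continuity of the polynomial $G$,
\[
\lim_{k\to\infty} s_k^{\nu(F)-\nu(G)} f(z) \;=\; G\bigl(\phi_1(z),\phi_2(z)\bigr) \qquad\text{uniformly on compacta.}
\]
For this limit to exist and be a nonzero polynomial in $z$ (as the right side is, since $G(\phi_1,\phi_2)\not\equiv 0$ — here one uses that $\phi_1\not\equiv 0$, again from the bi-Lipschitz lower bound) one must have $\nu(F) = \nu(G)$, which is the first assertion. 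Moreover, a standard fact about $\varpi$-homogeneity forces $\phi_1(z)$ to be a (nonzero) constant $\alpha$ and $\phi_2(z)$ to have the form $\phi_2(z) = \alpha^{q/p}\psi(z)$ with $\psi$ controlling the height direction — more carefully, one argues that after the blow-up the image arc $s\mapsto(\Phi_1,\Phi_2)$ is again ``parabolic'', so writing $\Phi_1(s^{p},s^{q}z)/s^{p}\to\alpha\ne 0$ one gets $G(\phi_1,\phi_2) = \alpha^{\nu(G)/p}\,g(\phi_2(z)/\phi_1(z)^{q/p})$, i.e. $f(z) = \sigma\, g(\phi(z))$ with $\sigma = \alpha^{\nu(G)/p}$ and $\phi(z) := \phi_2(z)/\alpha^{q/p}$. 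Since $\phi$ is a uniform limit of $c$-Lipschitz functions it is $c$-Lipschitz, so Proposition~\ref{prop:one_variable} applies and $\phi$ is an automorphism of $\mathbb{C}$; together with $\sigma\ne 0$ this says exactly that $f$ and $g$ are equivalent under automorphisms of $\mathbb{C}$.

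The main obstacle is the bookkeeping that pins down the two blow-up limits $\phi_1,\phi_2$: one must show $\phi_1$ is a nonzero constant and that $\phi_2$, after normalization, is genuinely a function of $z$ intertwining $f$ and $g$ — rather than, say, collapsing to a constant or depending on $z$ in a way incompatible with the height-function structure. This is where the full strength of the bi-Lipschitz hypothesis (both the upper Lipschitz bound, used for boundedness and to verify Hypothesis~A, and the lower bound from $\Phi^{-1}$ being Lipschitz, used to prevent degeneration $\phi_1\equiv 0$) is essential; a merely Lipschitz $\Phi$ would not suffice. A secondary technical point is ensuring a single effective sequence $\{s_k\}$ works simultaneously for $\Phi_1$ and $\Phi_2$, which is handled by the diagonal argument already set up in Section~\ref{section:blow-up_section} applied to the $\mathbb{C}^2$-valued map $(\Phi_1,\Phi_2)$ coordinatewise.
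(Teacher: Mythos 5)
Your overall architecture is the same as the paper's (weighted blow-up of the components of $\Phi$ along a common effective sequence, constancy and non-vanishing of the limit of $\Phi_1/s^p$, substitution into the quasihomogeneity identity, then Proposition \ref{prop:one_variable}), but there is a genuine gap at the step on which everything else hinges: the verification of Hypothesis A for $\Phi_2$. You only say ``choosing a point $z_0$ where it holds,'' and your fallback argument is not valid: the bi-Lipschitz lower bound along the arc gives $\max\bigl(|\Phi_1(s^p,s^qz_0)|,|\Phi_2(s^p,s^qz_0)|\bigr)\approx s^p$, which yields at best $\Phi_2(s^p,s^qz_0)=O(s^p)$; Hypothesis A demands the much stronger estimate $O(s^q)$ with $q>p$, and knowing that $\Phi_1/s^p$ does not tend to $0$ says nothing about $\Phi_2/s^q$ being bounded. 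Indeed, for a bi-Lipschitz map not adapted to the weights one expects $|\Phi_2|\approx s^p$ along a generic arc, so the $O(s^q)$ bound must come from the functional equation, not from metric estimates alone. The paper's proof gets it by taking $z_0$ a zero of the height function $f$ (such a zero exists by the non-homogeneity caveat), observing that $0=F(s^p,s^qz_0)=G(\Phi(s^p,s^qz_0))=s^{\nu(G)}G(\tilde x,\tilde y)$ with $\tilde x=\Phi_1/s^p$, $\tilde y=\Phi_2/s^q$, and then applying Cauchy's bound for the roots of the polynomial $G(\tilde x,\cdot)$, whose coefficients are bounded because $\tilde x$ is. Without this (or some substitute), the blow-up $\phi$ of $\Phi_2$ with the weight-$q$ normalization need not exist, and also your claim that $\mu=\lim\Phi_1/s^p\neq 0$ collapses, since it too uses $|\Phi_2(s^p,0)|\lesssim s^q$ to extract $|\Phi_1(s^p,0)|\approx s^p$ from the lower bound.

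A secondary, more easily repaired flaw: to get $\nu(F)=\nu(G)$ you need $G(\phi_1,\phi_2)\not\equiv 0$ in the case $\nu(F)>\nu(G)$, and ``$\phi_1\not\equiv 0$'' does not give this, because $z\mapsto(\mu,\phi(z))$ could a priori land in the zero set of $G$ (e.g.\ if $\phi$ were constant equal to a root of $G(\mu,\cdot)$). The paper sidesteps this by assuming without loss of generality $\nu(F)\leq\nu(G)$ (legitimate, since $\Phi^{-1}$ is also bi-Lipschitz), in which case the identity $f(z)=s_n^{\nu(G)-\nu(F)}G(\cdot)$ together with $f\not\equiv 0$ forces equality; you should either adopt that reduction or supply an argument ruling out the degenerate constant limit.
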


\begin{proof} Without loss of generality, let us assume that $\nu(F)\leq \nu(G)$. Let $\Phi:=\left(  \Phi_{1},\Phi_{2}\right)  \in\operatorname*{BiLip}\left(\mathbb{C}^{2},0\right)  $ be such that $G(X,Y)=F\circ\Phi(X,Y)$.%

\noindent\textbf{Claim 1}. The function $\Phi_{2}$ satisfies Hypothesis A of
Section \ref{section:blow-up_section}.

We are going to prove Claim 1 by mimicking the arguments used in the proof of
\cite[Prop. 4.(ii)]{Alvarez2020}. In fact, let us write the
quasihomogeneous polynomial $G(X,Y)$ with weights $\varpi=(p,q)$ in the form
\[
G(X,Y)=\sum_{j=0}^{m}c_{j}X^{d-qj}Y^{pj},\quad c_{j}\in\mathbb{C},
\]
where $pd=\nu(G)$ and $mq=d$. Let $z_{0}\in\mathbb{C}$ be a zero of the height function $f$ of $F$. Then,
for each sufficiently small $s\in\mathbb{C}^{\ast}$, we have
\begin{equation}
0=f(z_{0})=G(\frac{\Phi_{1}(s^{p},s^{q}z_{0})}{s^{p}},\frac{\Phi_{2}%
(s^{p},s^{q}z_{0})}{s^{q}})\label{eq: phi_2}%
\end{equation}
By applying Cauchy's bound on the roots of the polynomial equation
(\ref{eq: phi_2}), we obtain
\begin{equation}
\left\vert \frac{\Phi_{2}(s^{p},s^{q}z_{0})}{s^{q}}\right\vert \leq
1+\max\left\{  \frac{|c_{m-1}|}{c_{m}}|\tilde{x}^{q}|,\dots,\frac{|c_{1}%
|}{c_{m}}|\tilde{x}^{q(m-1)}|,\frac{|c_{0}|}{c_{m}}|\tilde{x}^{qm}|\right\}
,\label{ineq: phi_2}%
\end{equation}
where $\tilde{x}=\frac{\Phi_{1}(s^{p},s^{q}z_{0})}{s^{p}}$. Since
$\Phi_{1}$ is a Lipschitz function, then $|\tilde{x}|$ is bounded. Hence, it
follows from (\ref{ineq: phi_2}) that $\tilde{x}=\frac{\Phi
_{2}(s^{p},s^{q}z_{0})}{s^{q}}$ is bounded as well. Thus, Claim 1 is proven.%

As $\Phi_{2}$ satisfies Hypothesis A of Section
\ref{section:blow-up_section}, we can make use of a $\varpi$-weighted blow-up
$\phi$ of $\Phi_{2}$ at the origin with its respective effective sequence
of real positive numbers $\{s_{n}\}_{n\in\mathbb{N}}.$

Besides, since $\left\vert \Phi_{1}(s_{n}^{p},0)\right\vert \lesssim s_{n}^{p}$ as
$s_{n}\rightarrow0$, up to a subsequence (if necessary), one may suppose that
$\frac{\Phi_{1}(s_{n}^{p},0)}{s_{n}^{p}}\rightarrow\mu$.

\noindent\textbf{Claim 2.} $\mu\neq0$.

In fact, since $\Phi$ is bi-Lipschitz, we know that $\left\vert \Phi(s_{n}^{p},0)\right\vert \approx s_{n}^{p}$ as $s_{n}\rightarrow0$. On the other hand, we also know that $\lim\frac{\Phi_2(s_n^p,0)}{s_n^q}=\phi(0)$, i.e.  $\left\vert \Phi_{2}(s_{n}^{p},0)\right\vert \lesssim s_{n}^{q}$
as $s_{n}\rightarrow0$; hence, $\left\vert \Phi_{1}(s_{n}^{p},0)\right\vert \approx s_{n}^{p}$ as $s_{n}\rightarrow0$ and $\mu\neq0$.

\noindent\textbf{Claim 3.} Given $z\in\mathbb{C}$, $\frac
{\Phi_{1}(s_{n}^{p},s_{n}^{q}z)}{s_{n}^{p}}\to\mu$.

In fact, the proof of this claim is a direct consequence of the Lipschitz
property of the function $\Phi_{1}$ as one can see bellow:
\[
\left\vert \frac{\Phi_{1}(s_{n}^{p},s_{n}^{q}z)}{s_{n}^{p}}-\frac{\Phi
_{1}(s_{n}^{p},0)}{s_{n}^{p}}\right\vert \lesssim s_{n}^{q-p}%
\ \mbox{as}\ s_{n}\rightarrow0.
\]

Besides, notice that
\begin{align*}
s_{n}^{\nu(F)}F(1,z) &  =F(s_{n}^{p},s_{n}^{q}z)\\
&  =G(\Phi_{1}(s_{n}^{p},s_{n}^{q}z),\Phi_{2}(s_{n}^{p},s_{n}^{q}z))\\
&  =s_{n}^{\nu(G)}G(s_{n}^{-p}\cdot\Phi_{1}(s_{n}^{p},s_{n}^{q}z),s_{n}^{-q}
\Phi_{2}(s_{n}^{p},s_{n}^{q}z)). 
\end{align*}
Now, for the sake of simplicity, let us write $\mu$ as $\mu
=\sigma^{-p}$. From the above equations, we have the following consequences. Firstly, since
$$\lim_{n\rightarrow\infty}%
G(s_{n}^{-p}\cdot\Phi_{1}(s_{n}^{p},s_{n}^{q}z),s_{n}^{-q}\Phi_{2}(s_{n}%
^{p},s_{n}^{q}z))=G(\sigma^{-p},\phi(z)),$$ $\nu(F)\leq \nu(G)$, and $F(1,z)$ is not the zero function, we must have $\nu(F)=\nu(G)=\nu$. Secondly,
\begin{align*}
\sigma^{\nu}\cdot f(z) &  =\sigma^{\nu}\cdot\lim_{n\rightarrow\infty}%
G(s_{n}^{-p}\cdot\Phi_{1}(s_{n}^{p},s_{n}^{q}z),s_{n}^{-q}\Phi_{2}(s_{n}%
^{p},s_{n}^{q}z))\\
&  =\sigma^{\nu}\cdot G(\sigma^{-p},\phi(z))\\
&  =G(1,\varphi(z))=g(\varphi(z)),
\end{align*}
where $\varphi(z)=\sigma^{q}\cdot\phi(z)$ is a Lipschitz function. The result
then follows by Proposition \ref{prop:one_variable}.
\end{proof}

\begin{lemma}
\label{lem:Lip_analytic} Let $F,G\in\mathbb{C}[X,Y]$ be two (non-homogeneous)
quasihomogeneous polynomials with weights $\varpi=(p,q)$.
Let $f$ and $g$ be the height functions of $F$\ and $G$, respectively. Suppose
that $f$ and $g$ are equivalent under automorphisms of $\mathbb{C}$. If $p>1$ then $f$ and $g$ are equivalent under linear automorphisms of $\mathbb{C}$.
\end{lemma}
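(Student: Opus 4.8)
The plan is to translate the hypothesis about height functions back into a statement about the multisets of zeros, using the Proposition on multisets of zeros stated just before Section 4. By hypothesis $f$ and $g$ are equivalent under automorphisms of $\mathbb{C}$, so if $\{r_j : m_j\}_{j=1}^k$ is the multiset of zeros of $f$, then the multiset of zeros of $g$ is $\{a r_j + b : m_j\}_{j=1}^k$ for some $a\neq 0$ and $b$ in $\mathbb{C}$. I want to show that when $p>1$ one may in fact take $b=0$ (after possibly changing $a$ and multiplying by a non-zero constant), i.e. that $f$ and $g$ are equivalent under a linear automorphism.

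The key point is an arithmetic/structural constraint on the height function of a non-homogeneous quasihomogeneous polynomial with $p>1$. First I would recall the factored form $F(X,Y)=c_0 X^m Y^{m_0}\prod_{j=1}^k (Y^p - \lambda_j X^q)^{m_j}$ with the $\lambda_j$ pairwise distinct, so that $f(z)=F(1,z)=c_0 z^{m_0}\prod_{j=1}^k (z^p-\lambda_j)^{m_j}$. The crucial observation is that the exponents $pi+qj=\nu$ appearing in $F$ force a divisibility condition: writing $F(X,Y)=\sum a_{ij}X^iY^j$, every monomial has $j\equiv \nu \cdot (p^{-1}) \pmod{p}$ wait — more precisely $qj\equiv \nu \pmod p$, so since $\gcd(p,q)=1$ the exponent $j$ is determined modulo $p$; hence the set of $Y$-exponents occurring in $F$ lies in a single residue class mod $p$, and consequently $f(z)$, as a polynomial in $z$, has the form $z^{j_0}\tilde f(z^p)$ for some polynomial $\tilde f$ and some $j_0\in\{0,\dots,p-1\}$. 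In particular (using $p>1$), the multiset of \emph{non-zero} roots of $f$ is invariant under multiplication by any $p$-th root of unity, and more importantly its ``center of mass'' behaves rigidly: if the non-zero roots are $\rho_1,\dots,\rho_\ell$ (with multiplicity), then these come in full orbits under $z\mapsto \zeta_p z$, so $\sum \rho_i = 0$ (and in fact all power sums $\sum \rho_i^t$ vanish unless $p\mid t$). The same holds for $g$.

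Now I would exploit this. Suppose the affine map $z\mapsto az+b$ carries the multiset of zeros of $f$ to that of $g$. Both $f$ and $g$ have the ``$z^{j_0}$ times a polynomial in $z^p$'' shape, and in particular $0$ is a zero of $f$ of multiplicity $m_0(F)=j_0(F)$ reduced appropriately — actually $0$ is a root of $f$ iff $m_0>0$, and its multiplicity is a multiple-of... hmm, its multiplicity is exactly $m_0$, which need not be a multiple of $p$. The cleaner route: the multiset of zeros of $f$ splits as $\{0 : m_0\}\sqcup Z_f$ where $Z_f$ is a union of $\langle \zeta_p\rangle$-orbits, each of size $p$, all with equal multiplicities along the orbit; the analogous statement holds for $g$ with $Z_g$. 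If $m_0(F)\neq m_0(G)$ we may be in trouble, but the analytic-equivalence side of Theorem~\ref{thm:main} (and the $p>1$ non-homogeneity convention, which rules out $cX^mY^n$) should be invoked only in the main theorem, not here — so here I must argue directly. If $m_0>0$ then $b$ must be chosen so that $a\cdot 0 + b$ is a root of $g$; if also $m_0$ is not a multiple of $p$, then that root of $g$ must be the distinguished root $0$ of $g$ (since all other roots of $g$ have multiplicity equal to an orbit-multiplicity, but the orbit of a non-zero root has $p$ elements and they cannot all equal $b$ unless the orbit is a single point, impossible for $p>1$), forcing $b=0$. If $m_0=0$, i.e. neither height function vanishes at $0$: then $Z_f$ is the full multiset of zeros of $f$ and it is a union of $p$-element orbits symmetric about $0$, likewise $Z_g$; the first power sum gives $\sum_{\rho\in Z_f}\rho = 0$ and $\sum_{\rho\in Z_g}\rho=0$, but the affine image of $Z_f$ has power sum $a\sum\rho + b\,\#Z_f = b\,\deg g$, so $b\deg g = 0$, hence $b=0$ since $\deg g = \deg f \geq p \geq 2 > 0$.

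\textbf{Main obstacle.} The delicate case is when $m_0>0$ but $p\mid m_0$, so that $0$ is not obviously ``distinguished'' from the other roots by its multiplicity alone. In that case I would use a second invariant: after translating, the whole multiset of zeros of $g$, including the multiplicity-$m_0$ point $b$, must again be of the form $\{0:m_0'\}\sqcup(\text{orbits})$; comparing the two orbit decompositions forces the translation to carry orbits to orbits and the center $0$ of $f$'s orbit-structure to the center $0$ of $g$'s, whence $b=0$. Concretely, I would phrase it via the polynomial identity: $g(z)$ and $f(az+b)$ have the same multiset of zeros (up to the leading constant), both $f(z)=z^{j_0}F_1(z^p)$ and $g(z)=z^{j_1}G_1(z^p)$, and substituting shows $f(az+b)=(az+b)^{j_0}F_1((az+b)^p)$ must again be (a constant times) $z^{j_1}G_1(z^p)$; differentiating the logarithmic derivative, or simply comparing the sets of critical/branch data, pins down $b=0$ (and incidentally $j_0=j_1$, $a^p$ a ratio of the relevant parameters). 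The cleanest formulation of this last step — choosing exactly the right invariant so the argument is uniform in all the sub-cases — is where the real work lies; the rest is bookkeeping with the Proposition on multisets of zeros.
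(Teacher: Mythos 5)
Your structural observation is exactly the right one and is the same symmetry the paper exploits: for $p>1$ the height functions have the shape $f(z)=c_0z^{m_0}\prod_{j=1}^k(z^p-\lambda_j)^{m_j}$, so their multisets of zeros are invariant under multiplication by a primitive $p$-th root of unity, and one wants to conclude that an affine map $\phi(z)=az+b$ carrying the zeros of $f$ onto the zeros of $g$ must satisfy $b=0$. (The paper's own proof is exactly this, stated tersely: the root sets of $f$ and $g$ are rotation-invariant, and an affine map compatible with that symmetry must fix $0$.) However, as written your proof has a genuine gap. The case split on $m_0$ is where it breaks down: in the case $m_0>0$, $p\nmid m_0$, the parenthetical argument does not establish that $b$ must be the root $0$ of $g$ --- a non-zero root of $g$ lies in an orbit of size $p$ but its individual multiplicity is just some $n_j$, which may perfectly well equal $m_0$, so multiplicity considerations alone do not pin down $\phi(0)$; and the remaining case $m_0>0$ with $p\mid m_0$ you explicitly leave unresolved ("where the real work lies"). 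So the proposal does not yet prove the lemma.

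The frustrating part is that the complete argument is already contained in your $m_0=0$ case, and the case analysis is unnecessary. Apply the first power sum to the \emph{full} multiset of zeros, including $0$ with multiplicity $m_0$ (which contributes nothing to the sum). Rotation-invariance of the zero multiset of $f$ gives $\zeta\sum\rho=\sum\rho$ with $\zeta\neq1$, hence $\sum\rho=0$, and likewise for $g$. The image multiset $\{a\rho+b\}$ then has sum $a\cdot 0+b\cdot\deg f=b\deg f$, which must equal the sum of the zeros of $g$, namely $0$. Since $F$ is non-homogeneous with $p>1$ it is not of the form $cX^mY^n$, so $k\geq1$ and $\deg f\geq p\geq 2>0$; hence $b=0$ and $f$, $g$ are equivalent under a linear automorphism. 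With this one uniform step replacing your three sub-cases, your approach closes and coincides in substance with the paper's symmetry argument.
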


\begin{proof}
Let us consider the assumptions of the lemma and $p>1$. We see the roots
of both $f$ and $g$ are invariant by rotations by $p$th roots of unity. But
$\phi$ commutes with such rotations iff $\phi(0)=0$. The result then follows.
\end{proof}

\begin{corollary}\label{cor:multiplicity_invariance1}
	Let $F,G\in\mathbb{C}[X,Y]$ be two (non-homogeneous)
	quasihomogeneous polynomials with weights $1=p<q$. Let us write:
	$$
	F(X,Y)=c_0X^{m}{\textstyle\prod\nolimits_{j=0}^{k}}
	(Y-\lambda_{j}X^{q})^{m_j}, \ \mbox{with} \ m_0\leq\cdots\leq m_k,$$ some $\lambda_j=0$ and $\lambda_0,\dots,\widehat{\lambda_j},\dots,\lambda_k\in\mathbb{C}^*$, 
	and 
	$$
	G(X,Y)=d_0X^{n}{\textstyle\prod\nolimits_{j=0}^{\ell}}
	(Y-\mu_{j}X^{q})^{n_j} \ \mbox{with} \ n_0\leq\cdots\leq n_{\ell},$$ some $\mu_i=0$ and $\mu_0,\dots,\widehat{\mu_i},\dots,\mu_{\ell}\in\mathbb{C}^*$.
	
	If $F$ and $G$ are right  bi-Lipschitz equivalent as function-germs at $0\in\mathbb{C}^2$ then $k=\ell$, $m=n$, $m_j=n_j$ $\forall \ j=0,1,\dots,k$, and  there are $a\in\mathbb{C}^*$, $b\in\mathbb{C}$ such that $a\lambda_j+b=\mu_j$ $\forall \ j=1,\dots,k$.
\end{corollary}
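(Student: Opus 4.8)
The plan is to combine Proposition \ref{prop:main proposition} with Lemma \ref{lem:Lip_analytic} in the case $p=1$ being excluded (here $p=1$, so the lemma does not directly apply, but the key is that the multisets of zeros carry all the data we need). First I would invoke Proposition \ref{prop:main proposition}: since $F$ and $G$ are right bi-Lipschitz equivalent, their weighted degrees coincide, $\nu(F)=\nu(G)=\nu$, and their height functions $f(z)=F(1,z)$ and $g(z)=G(1,z)$ are equivalent under automorphisms of $\mathbb{C}$. Then I would translate this into the language of multisets of zeros via Proposition (the one classifying equivalence under automorphisms): the multiset of zeros of $g$ equals $\{a r + b : \text{mult}\}$ where $\{r : \text{mult}\}$ is the multiset of zeros of $f$, for some $a\in\mathbb{C}^*$, $b\in\mathbb{C}$; in particular $\deg f = \deg g$.

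Next I would read off the factorizations. Writing $F(X,Y)=c_0 X^m \prod_{j=0}^k (Y-\lambda_j X^q)^{m_j}$, dehomogenizing at $X=1$ gives $f(z)=c_0\prod_{j=0}^k (z-\lambda_j)^{m_j}$, so the multiset of zeros of $f$ is exactly $\{\lambda_j : m_j\}_{j=0}^k$, and $\deg f = \sum_{j=0}^k m_j$, while $\nu(F) = m\cdot 1 + q\sum m_j = m + q\deg f$ (using $pi+qj=\nu$ with $p=1$, the $Y$-degree of each monomial times $q$ plus the $X$-exponent equals $\nu$). Similarly $g(z)=d_0\prod_{j=0}^\ell (z-\mu_j)^{n_j}$ has multiset of zeros $\{\mu_j : n_j\}_{j=0}^\ell$ and $\nu(G)=n+q\deg g$. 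From $\deg f=\deg g$ and the multiset correspondence, the number of distinct roots matches, so $k=\ell$; from $\nu(F)=\nu(G)$, i.e. $m+q\deg f = n + q\deg g$, we get $m=n$.

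The remaining point is matching the individual multiplicities and the affine relation among the nonzero $\lambda_j,\mu_j$. The hypothesis fixes the orderings $m_0\le\cdots\le m_k$ and $n_0\le\cdots\le n_k$, and prescribes that $\lambda_0$ (say) equals one of the $\lambda_j$ that is zero — wait, rather the statement says some $\lambda_j=0$; I would first match the multiplicity-multiset $\{m_j\}$ with $\{n_j\}$: since the multiset bijection between zeros of $f$ and $g$ preserves multiplicities (an affine map $r\mapsto ar+b$ just relabels roots, keeping their multiplicities), the multiset $\{m_0,\dots,m_k\}$ equals $\{n_0,\dots,n_k\}$ as multisets, and because both are listed in nondecreasing order, $m_j=n_j$ for all $j$. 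The main obstacle — and the reason the stated conclusion only asserts $a\lambda_j+b=\mu_j$ for $j=1,\dots,k$ rather than $j=0,\dots,k$ — is that the affine map from the multiset correspondence need not send the index-$j$ root of $f$ to the index-$j$ root of $g$ unless the multiplicities $m_j$ are all distinct: when several $m_j$ coincide the affine map may permute those roots. So I would need to argue that, after possibly relabeling the roots within each block of equal multiplicity (which is allowed since the indexing of roots with a given multiplicity is not canonical), the affine relation holds index-by-index; and the root $\lambda_j=0$ must be carried to the root $\mu_i=0$ precisely when it is the unique zero of its multiplicity — otherwise one chooses the labeling so that the distinguished zero roots correspond. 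In all cases the nonzero roots $\lambda_1,\dots,\lambda_k$ (after this relabeling) satisfy $a\lambda_j+b=\mu_j$ with the same $a,b$ coming from the multiset correspondence, which is exactly the claim.
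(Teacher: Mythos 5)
Your proposal is correct, and its backbone is the same as the paper's: apply Proposition \ref{prop:main proposition} to get $\nu(F)=\nu(G)$ and an automorphism $\phi(z)=az+b$ with $cf=g\circ\phi$, then read off $k=\ell$, the equality of multiplicities, and the affine relation $a\lambda_j+b=\mu_j$ from the identification of the multisets of zeros of $f$ and $g\circ\phi$. The one step where you genuinely diverge is $m=n$: the paper deduces it from the bi-Lipschitz invariance of the order at the origin, ${\rm ord}_0F={\rm ord}_0G$ (citing Risler--Trotman and Parusinski), together with ${\rm ord}_0F=m+\sum_j m_j$, whereas you get it by comparing weighted degrees, $\nu(F)=m+q\sum_j m_j$ and $\nu(G)=n+q\sum_j n_j$, combined with $\nu(F)=\nu(G)$ (already furnished by Proposition \ref{prop:main proposition}) and $\deg f=\deg g$ (furnished by the automorphism equivalence). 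This is a valid and more self-contained route, since it avoids the external invariance-of-multiplicity theorem entirely. One further remark: your worry about relabeling roots within blocks of equal multiplicity is a genuine subtlety that the paper's proof passes over silently (it simply writes $\phi(\lambda_j)=\mu_j$ for all $j$); since the indexing inside such a block is not canonical, the index-by-index relation indeed only holds after a permissible relabeling, and your resolution is fine. Only your aside about the zero roots is slightly off: $\phi$ need not send the zero root of $f$ to the zero root of $g$ at all (as $b$ may be nonzero), but nothing in the stated conclusion requires that, so this does not affect the argument.
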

\begin{proof}Let $f(z)$ and $g(z)$ be the height functions of $F$ and $G$, respectively. We have
$$f(z)= c_0{\textstyle\prod\nolimits_{j=0}^{k}}
(z-\lambda_{j})^{m_j} \ \mbox{and} \ g(z)=d_0{\textstyle\prod\nolimits_{j=0}^{\ell}}
(z-\mu_{j})^{n_j}.$$
Since $F$ and $G$ are right bi-Lipschitz equivalent, by Proposition \ref{prop:main proposition} we have that $f(z)$ and $g(z)$ are equivalent under automorphisms of $\mathbb{C}$, let say $cf(z)=g(\phi(z))$ such that $c\neq 0$ and $\phi$ is an automorphism of $\mathbb{C}$. It follows that $f(z)$ and $g(\phi(z))$ have the same multiset of zeros.  In particular, $k=\ell$,  $m_j=n_j$ and $\phi(\lambda_j)=\mu_j$ $\forall \ j=0,\dots,k$. 

Finally, since $F$ and $G$ are right bi-Lipschitz equivalent as function-germs at $0\in\mathbb{C}^2$, we know that the order of $F$ and $G$ at $0$ are the same, ${\rm ord}_0F={\rm ord}_0G$ (see, for instance, \cite{P, RT}). On the other hand, $${\rm ord}_0F= m + \sum_{j=0}^k m_j \ \mbox{and} \ {\rm ord}_0G= n + \sum_{j=0}^k m_j.$$ We conclude that $m=n$ as well.
\end{proof}

\begin{corollary}\label{cor:multiplicity_invariance2}
	Let $F,G\in\mathbb{C}[X,Y]$ be two (non-homogeneous)
	quasihomogeneous polynomials with weights $1<p<q$. Let us write
	$$
	F(X,Y)=c_0X^{m}Y^{m_0}{\textstyle\prod\nolimits_{j=1}^{k}}
	(Y^p-\lambda_{j}X^{q})^{m_j}, \ \mbox{with} \ \lambda_j\in\mathbb{C}^*, \ m_1\leq\cdots\leq m_k,$$
	and 
	$$
	G(X,Y)=d_0X^nY^{n_0}{\textstyle\prod\nolimits_{j=1}^{\ell}}
	(Y^{p}-\mu_{j}X^{q})^{n_j} \ \mbox{with} \ \mu_j\in\mathbb{C}^*, \ n_1\leq\cdots\leq n_{\ell}.$$
	If $F$ and $G$ are right  bi-Lipschitz equivalent as function-germs at $0\in\mathbb{C}^2$ then $k=\ell$, $m=n$, $m_j=n_j$ $\forall \ j=0,1,\dots,k$, and there is $a\in\mathbb{C}^*$ such that $a\lambda_j=\mu_j$ $\forall \ j=1,\dots,k$.	
\end{corollary}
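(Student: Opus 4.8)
The plan is to transcribe the proof of Corollary~\ref{cor:multiplicity_invariance1}, using the hypothesis $p>1$ to promote the equivalence of the height functions from affine to linear. Passing to the height functions,
\[
f(z)=F(1,z)=c_0\,z^{m_0}\prod_{j=1}^{k}(z^{p}-\lambda_j)^{m_j},\qquad
g(z)=G(1,z)=d_0\,z^{n_0}\prod_{j=1}^{\ell}(z^{p}-\mu_j)^{n_j},
\]
which are non-constant because $F$ and $G$ are non-homogeneous. Since $F$ and $G$ are right bi-Lipschitz equivalent, Proposition~\ref{prop:main proposition} gives that $f$ and $g$ are equivalent under automorphisms of $\mathbb{C}$, and since $p>1$, Lemma~\ref{lem:Lip_analytic} upgrades this to an equivalence under a \emph{linear} automorphism: there are $c,a\in\mathbb{C}^{*}$ with $c\,f(z)=g(az)$. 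Consequently $f(z)$ and the polynomial $z\mapsto g(az)$ have the same multiset of zeros.

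The core step is to decode this multiset identity. Let $\zeta$ be a primitive $p$-th root of unity; then $f(\zeta z)=\zeta^{m_0}f(z)$ and $g(\zeta z)=\zeta^{n_0}g(z)$, so the zero set of $f$ and that of $z\mapsto g(az)$ — which coincide, call their union $S$ — are invariant under multiplication by $\zeta$. The nonzero part of the zero set of $f$ is the disjoint union of the sets $O_j:=\{r:r^{p}=\lambda_j\}$, $j=1,\dots,k$ (pairwise disjoint because the $\lambda_j$ are pairwise distinct, each a single $\langle\zeta\rangle$-orbit of size $p$ because $\lambda_j\neq0$), and the zero-multiplicity of $f$ is constantly $m_j$ on $O_j$; likewise the nonzero part of the zero set of $z\mapsto g(az)$ is the disjoint union of the orbits $O'_j:=\{r:r^{p}=a^{-p}\mu_j\}$, with multiplicity $n_j$ on $O'_j$. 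Because the decomposition of $S\setminus\{0\}$ into $\langle\zeta\rangle$-orbits is canonical, the families $\{O_j\}$ and $\{O'_j\}$ coincide; this yields $k=\ell$ and a bijection $\sigma$ with $O_j=O'_{\sigma(j)}$ and (comparing multiplicities) $m_j=n_{\sigma(j)}$, while comparing the multiplicities at $0$ gives $m_0=n_0$. Equality of the monic degree-$p$ polynomials $z^{p}-\lambda_j$ and $z^{p}-a^{-p}\mu_{\sigma(j)}$ forces $a^{p}\lambda_j=\mu_{\sigma(j)}$, and since $(m_j)_{j=1}^{k}$ and $(n_j)_{j=1}^{\ell}$ are non-decreasing enumerations of the same multiset we get $m_j=n_j$ for all $j$. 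Finally, the normalization $m_1\le\dots\le m_k$ leaves the labelling of the $\lambda_j$ within a block of equal multiplicity free, so after reordering those factors we may take $\sigma=\mathrm{id}$; renaming the constant $a^{p}$ as $a$ gives $a\lambda_j=\mu_j$ for $j=1,\dots,k$.

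It remains to prove $m=n$, which I would do exactly as in Corollary~\ref{cor:multiplicity_invariance1}: right bi-Lipschitz equivalence of function-germs preserves the order at the origin (see \cite{P, RT}), so $\mathrm{ord}_0F=\mathrm{ord}_0G$. Since $p<q$, the monomial of least total degree in $F(X,Y)=c_0X^{m}Y^{m_0}\prod_{j=1}^{k}(Y^{p}-\lambda_jX^{q})^{m_j}$ is $c_0X^{m}Y^{\,m_0+p\sum_{j=1}^{k}m_j}$, hence $\mathrm{ord}_0F=m+m_0+p\sum_{j=1}^{k}m_j$, and similarly $\mathrm{ord}_0G=n+n_0+p\sum_{j=1}^{\ell}n_j$; together with $m_0=n_0$ and $\sum m_j=\sum n_j$ (both already established), $\mathrm{ord}_0F=\mathrm{ord}_0G$ yields $m=n$.

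The only genuinely delicate point I anticipate is the orbit bookkeeping of the second paragraph: one must check carefully that equality of the two multisets of zeros, together with the $\langle\zeta\rangle$-symmetry forced by $p>1$, really does match the factors $(Y^{p}-\lambda_jX^{q})^{m_j}$ to the $(Y^{p}-\mu_jX^{q})^{n_j}$ up to one common dilation of the $\lambda$'s and a multiplicity-preserving relabelling — and not merely up to some weaker correspondence. Everything else is a routine adaptation of the $p=1$ argument of Corollary~\ref{cor:multiplicity_invariance1}.
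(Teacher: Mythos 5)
Your proposal is correct and follows essentially the same route as the paper: Proposition \ref{prop:main proposition} plus Lemma \ref{lem:Lip_analytic} give a linear equivalence $c\,f(z)=g(az)$ of the height functions, the multisets of zeros are compared to obtain $k=\ell$, $m_0=n_0$, $m_j=n_j$ and $\mu_j=a^{p}\lambda_j$ (the paper writes $\mu_j=\phi(\lambda_j)$ directly, treating the $\mu_j$ as the zeros, while you carry out the $p$-th-root orbit bookkeeping and then rename $a^{p}$ as $a$ --- a harmless and in fact more careful rendering of the same step), and finally bi-Lipschitz invariance of the order at the origin \cite{P,RT} yields $m=n$. Your explicit treatment of the relabelling within blocks of equal multiplicity addresses a point the paper's proof passes over silently, but it does not change the argument.
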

\begin{proof}
	Let $f(z)$ and $g(z)$ be the height functions of $F$ and $G$, respectively. Then we have
	$$f(z)= c_0{\textstyle\prod\nolimits_{j=1}^{k}}
	(z^{p}-\lambda_{j})^{m_j} \ \mbox{and} \ g(z)=d_0{\textstyle\prod\nolimits_{j=1}^{\ell}}
	(z^{p}-\mu_{j})^{n_j}.$$
	Since $F$ and $G$ are right bi-Lipschitz equivalent, by Proposition \ref{prop:main proposition} we have that $f(z)$ and $g(z)$ are equivalent under linear automorphisms of $\mathbb{C}$, let say $cf(z)=g(\phi(z))$ such that $c\neq 0$ and $\phi$ is an automorphism of $\mathbb{C}$; $\phi(0)=0$. It follows that $f(z)$ and $g(\phi(z))$ have the same multiset of zeros, i.e.
	$$\{0:n_0,\mu_1:n_1,\dots,\mu_{\ell}:n_{\ell}\}= \{0:m_0,\phi(\lambda_1):m_1,\dots,\phi(\lambda_k):m_k\}.$$ In particular, $k=\ell$, $m_j=n_j$, and $\mu_j=\phi(\lambda_j)$ $\forall \ j=0,\dots,k$.
	
	Finally, since $F$ and $G$ are right bi-Lipschitz equivalent as function-germs at $0\in\mathbb{C}^2$, we know that the order of $F$ and $G$ at $0$ are the same, ${\rm ord}_0F={\rm ord}_0G$ (see, for instance, \cite{P, RT}). On the other hand, $${\rm ord}_0F= m + m_0 + p\sum_{j=1}^k m_j \ \mbox{and} \ {\rm ord}_0G= n + m_0 + p\sum_{j=1}^k m_j.$$ Hence, $m=n$ as well.
\end{proof}

\section{Bi-Lipschitz equivalence vs. Analytic equivalence}

At this point, we are ready to present a proof of Theorem \ref{thm:main}. Actually, we prove a result stronger than Theorem \ref{thm:main}, as one can see bellow.

\begin{theorem}\label{thm:main2}
Let $F,G\in\mathbb{C}[X,Y]$ be (non-homogeneous) quasihomogeneous
polynomials with the same weights $\varpi=(p,q)$. Suppose that
$F$ and $G$ are (right) bi-Lipschitz equivalent as function-germs at $0\in\mathbb{C}^2$. Then we have the following alternatives:
\begin{enumerate}
	\item[a)] If $p=1$ then there are $\alpha,\beta\in\mathbb{C}^*$ and $\gamma\in\mathbb{C}$ such that $G(X,Y)=F(\alpha X,\beta Y - \gamma X^q)$.
	\item[b)] If $p>1$ then there are $\alpha,\beta\in\mathbb{C}^*$ such that $F(X,Y)=G(\alpha X,\beta Y)$.
\end{enumerate}
\end{theorem}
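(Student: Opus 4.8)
The plan is to reduce Theorem~\ref{thm:main2} to the combinatorial information about height functions already extracted in Corollaries~\ref{cor:multiplicity_invariance1} and \ref{cor:multiplicity_invariance2}, and then to promote the affine relation between the multisets of zeros of $f$ and $g$ to an honest analytic change of coordinates on $(\mathbb{C}^2,0)$. Write $F$ and $G$ in their factored quasihomogeneous form as in those corollaries. Since $F$ and $G$ are bi-Lipschitz equivalent, the relevant corollary applies and gives: $\nu(F)=\nu(G)$, the monomial exponents match ($m=n$, $m_0=n_0$ in the $p>1$ case, and the ordered multiplicities $m_j=n_j$ agree), $k=\ell$, and there exist $a\in\mathbb{C}^*$ (and $b\in\mathbb{C}$ when $p=1$) with $a\lambda_j+b=\mu_j$ for all $j$ (respectively $a\lambda_j=\mu_j$ when $p>1$). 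The point is simply to realize the substitution $Y\mapsto \beta Y-\gamma X^q$ (resp. $Y\mapsto\beta Y$) and $X\mapsto\alpha X$ as the source of this relation and to fix the scalars $\alpha,\beta,\gamma$ explicitly.

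Consider case (b), $p>1$ first, as it is cleaner. Here $F(X,Y)=c_0 X^mY^{m_0}\prod_{j=1}^k(Y^p-\lambda_jX^q)^{m_j}$ and similarly for $G$ with $n=m$, $n_0=m_0$, $n_j=m_j$, and $\mu_j=a\lambda_j$. I would set $\beta$ to be a $p$-th root of $a$, so that $\beta^p=a$, and try a substitution $X\mapsto\alpha X$, $Y\mapsto\beta Y$. Under this, $(Y^p-\mu_jX^q)\mapsto(\beta^pY^p-\mu_j\alpha^qX^q)=\beta^p(Y^p-(\mu_j\alpha^q/\beta^p)X^q)$, and I want this proportional to $(Y^p-\lambda_jX^q)$, i.e. $\mu_j\alpha^q/\beta^p=\lambda_j$, i.e. $\alpha^q=\beta^p\lambda_j/\mu_j=\beta^p/a=1$; so $\alpha$ is any $q$-th root of unity, say $\alpha=1$, provided we check this is consistent with the $X^m$ and $Y^{m_0}$ factors (it is, as those just contribute scalar $\alpha^m\beta^{m_0}$). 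Collecting the leading scalar, $G(\alpha X,\beta Y)$ equals $(\text{nonzero constant})\cdot F(X,Y)$; absorbing that constant into $\beta$ (or into $\alpha$) via one more rescaling of one variable finishes (b). The mild subtlety is to verify that the constant can always be absorbed — one can scale $X$ by a $q$-th root and $Y$ by a $p$-th root independently, and since $\gcd(p,q)=1$ the map $(u,v)\mapsto (u^q v^p)$ or similar has enough freedom, but one should check the exponent arithmetic so that the target constant $c_0/d_0$ (times the already-accumulated factor) is hit; this is the routine computation I would not grind through here.

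Case (a), $p=1$, is analogous but now an affine shift in $Y$ is genuinely needed because $b$ need not be zero. Here $F(X,Y)=c_0X^m\prod_{j=0}^k(Y-\lambda_jX^q)^{m_j}$ and $\mu_j=a\lambda_j+b$. The natural guess is $X\mapsto\alpha X$, $Y\mapsto\beta Y+\gamma X^q$ (matching the form $G(X,Y)=F(\alpha X,\beta Y-\gamma X^q)$ once we invert). Under such a substitution, $(Y-\mu_j X^q)\mapsto(\beta Y+\gamma X^q-\mu_j\alpha^q X^q)=\beta(Y-((\mu_j\alpha^q-\gamma)/\beta)X^q)$, so I need $(\mu_j\alpha^q-\gamma)/\beta=\lambda_j$ simultaneously for all $j$; since $\mu_j=a\lambda_j+b$, this reads $a\alpha^q\lambda_j+b\alpha^q-\gamma=\beta\lambda_j$ for all $j$, which holds for every $j$ as soon as $a\alpha^q=\beta$ and $b\alpha^q=\gamma$. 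Thus choose $\alpha=1$ (or any $q$-th root of unity), $\beta=a$, $\gamma=b$; again collect the leading constant and absorb it by an extra rescaling. One must also handle the $X^m$ factor: here $m=n$ is given by Corollary~\ref{cor:multiplicity_invariance1}, and the substitution sends $X^m\mapsto\alpha^mX^m$, contributing only to the overall constant, so nothing goes wrong.

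\textbf{Main obstacle.} The genuinely delicate point, in both cases, is the bookkeeping that lets one absorb the leftover nonzero scalar $c_0/d_0$ (times whatever powers of $\alpha,\beta,\gamma$ have accumulated) into the choice of $\alpha,\beta$ while \emph{keeping the stated normal form} — i.e. ensuring the final $\Phi$ is literally of the form $(X,Y)\mapsto(\alpha X,\beta Y)$ or $(\alpha X,\beta Y-\gamma X^q)$ with the same $F$, not $F$ up to a constant. Because the weighted degree is $\nu$ and scaling $X\mapsto\alpha X$ multiplies $F$ by a factor depending on $\alpha$ through the various exponents (and similarly for $Y$), one needs that the pair of multiplicative characters $(\alpha,\beta)\mapsto(\text{scalar on }F)$ is surjective onto $\mathbb{C}^*$; this ultimately rests on $\gcd(p,q)=1$ together with $F$ being non-homogeneous (so that at least one genuine binomial factor is present, preventing a degenerate character). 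I would isolate this as a short lemma: given the exponent data, the scalar can be prescribed. Everything else is the direct substitution check sketched above, together with invoking the two corollaries for the matching of combinatorial data.
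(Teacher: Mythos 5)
Your overall strategy is the same as the paper's: invoke Proposition \ref{prop:main proposition} and Corollaries \ref{cor:multiplicity_invariance1}--\ref{cor:multiplicity_invariance2} to match the exponent data and obtain the affine relation $\mu_j=a\lambda_j+b$ (resp.\ $\mu_j=a\lambda_j$), then realize it by a substitution $X\mapsto\alpha X$, $Y\mapsto\beta Y-\gamma X^q$ (resp.\ $Y\mapsto\beta Y$) and kill the leftover multiplicative constant. Up to the constant, your explicit choices ($\beta^p=a$, $\alpha^q=1$ in case (b); $\beta=a\alpha^q$, $\gamma=b\alpha^q$ in case (a)) are fine and parallel the paper's computation.

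The genuine gap is exactly the step you defer: absorbing the residual constant \emph{while preserving the already-arranged matching of roots}, and the mechanism you gesture at does not work as stated. Scalings $X\mapsto uX$, $Y\mapsto vY$ that keep each factor $Y^p-\lambda_jX^q$ mapping onto the corresponding $Y^p-\mu_jX^q$ must satisfy $u^q=v^p$ (otherwise the ratio $u^q/v^p$ perturbs every root), so you cannot use the full freedom of the map $(u,v)\mapsto u^qv^p$; and if you restrict to $u$ a $q$-th root of unity and $v$ a $p$-th root of unity, the polynomial only gets multiplied by a root of unity, which cannot hit an arbitrary value of $c_0/d_0$. Moreover the claim that the absorption "rests on $\gcd(p,q)=1$ and non-homogeneity" is misplaced: the non-homogeneity is needed earlier (for Proposition \ref{prop:main proposition} and the corollaries), not here. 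The correct one-line fix, which is what the paper does, is to use the weighted $\mathbb{C}^*$-action: replace $(\alpha,\beta,\gamma)$ by $(s^p\alpha,\,s^q\beta,\,s^q\gamma)$; this lies in the constraint locus $u^q=v^p$ (so the root matching is untouched, since $\alpha^q/\beta^p$ is unchanged and the shift term rescales coherently) and multiplies $F(\alpha X,\beta Y-\gamma X^q)$ by $s^{\nu}$ by quasihomogeneity. Since $s\mapsto s^{\nu}$ is surjective onto $\mathbb{C}^*$ (only $\nu\geq1$ is needed), the constant $\alpha^{-m}c_0/d_0$-type factor can always be prescribed, and your "short lemma" becomes immediate. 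With that substitution in place of your roots-of-unity argument, your proof coincides with the paper's.
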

\begin{remark}
	Let us point out that Theorem \ref{thm:main2} is analogous to  \cite[Prop. 3.4]{KP} for $C^1$ right equivalence of quasihomogeneous polynomials in two real variables. We believe that our result does not depend on that one because we are considering  local bi-Lipschitz right equivalence (which is weaker than $C^1$ right equivalence) and our result is in the setting of complex variables while \cite[Prop. 3.4]{KP} is in the context of real variables.    
\end{remark}
\begin{proof}[Proof of Theorem \ref{thm:main2}]
	Since $F$ and $G$ are (right) bi-Lipschitz equivalent as function-germs at $0\in\mathbb{C}^2$, it follows from Proposition \ref{prop:main proposition} that $F$ and $G$ have the same weighted degree, let us say $\nu$. 

\noindent{\bf Case $p=1$}. In this case, by Corollary \ref{cor:multiplicity_invariance1}, one can write
	$$
	F(X,Y)=c_0X^{m}{\textstyle\prod\nolimits_{j=0}^{k}}
	(Y-\lambda_{j}X^{q})^{m_j}$$
	and 
	$$
	G(X,Y)=d_0X^{m}{\textstyle\prod\nolimits_{j=0}^{k}}
	(Y-\mu_{j}X^{q})^{m_j}.$$	
 Corollary \ref{cor:multiplicity_invariance1} also says that $\exists a\in\mathbb{C}^*$,  $\exists b\in\mathbb{C}$ such that $a\lambda_j+b=\mu_j$ $\forall \ j=0,\dots,k$. Then
\begin{align*}
G(X,Y)& =d_0X^{m}{\textstyle\prod\nolimits_{j=0}^{k}}(Y-\mu_{j}X^{q})^{m_j}  \\
& =d_0X^{m}{\textstyle\prod\nolimits_{j=0}^{k}}Y-(a\lambda_j+b)X^{q})^{m_j} \\
& =d_0X^{m}{\textstyle\prod\nolimits_{j=0}^{k}}
((Y-bX^{q})-a\lambda_jX^q)^{m_j}.
\end{align*}
By taking $\beta\in\mathbb{C}^*$ such that $\beta^{\sum_{j=0}^{k}m_j}=c_0/d_0$, we obtain
\begin{align*}
G(X,Y)& =c_0X^{m}{\textstyle\prod\nolimits_{j=0}^{k}} ((\beta Y-b\beta X^{q})-a\beta\lambda_j X^q)^{m_j}.
\end{align*}
By taking $\alpha\in\mathbb{C}^*$ such that $\alpha^q=a\beta$ and  $\gamma=b\beta$, we obtain
\begin{align*}
G(X,Y)& =c_0X^{m}{\textstyle\prod\nolimits_{j=1}^{k}} ((\beta Y-\gamma X^q)-\lambda_j(\alpha X)^q)^{m_j} \\
& = \alpha^{-m}F(\alpha X, \beta Y - \gamma X^q).
\end{align*}
Let us consider $s\in\mathbb{C}^*$ such that $s^{\nu}=\alpha^{-m}$ and let us put $\alpha'=s\alpha$, $\beta'=s^q\beta$, $\gamma'=s^q\gamma$. Thus,
$G(X,Y)= F(\alpha' X, \beta' Y - \gamma' X^q).$

\noindent{\bf Case $p>1$}. In this case, by Corollary \ref{cor:multiplicity_invariance2}, one can write
$$
F(X,Y)=c_0X^{m}Y^{m_0}{\textstyle\prod\nolimits_{j=0}^{k}}
(Y-\lambda_{j}X^{q})^{m_j}$$
and 
$$
G(X,Y)=d_0X^{m}Y^{m_0}{\textstyle\prod\nolimits_{j=0}^{k}}
(Y-\mu_{j}X^{q})^{m_j}.$$
Corollary \ref{cor:multiplicity_invariance2} also says that $\exists a\in\mathbb{C}^*$ such that $a\lambda_j=\mu_j$ $\forall \ j=1,\dots,k$. Then
\begin{align*}
G(X,Y)&=d_0X^{m}Y^{m_0}{\textstyle\prod\nolimits_{j=1}^{k}}(Y^p-\mu_{j}X^{q})^{m_j} \\
&=d_0X^{m}Y^{m_0}{\textstyle\prod\nolimits_{j=1}^{k}}(Y^p-a\lambda_jX^{q})^{m_j}.
\end{align*}
Let $b\in\mathbb{C}^*$ satisfy $\beta^{m_0+ p\sum_{j=1}^{k}m_j}=c_0/d_0$. Then
\begin{align*}
G(X,Y)&=c_0X^{m}(\beta Y)^{m_0}{\textstyle\prod\nolimits_{j=1}^{k}} ((\beta Y)^p-a\beta^p\lambda_j X^q)^{m_j}.
\end{align*}
By taking $\alpha\in\mathbb{C}^*$ such that $\alpha^q=a\beta^p$, we obtain
\begin{align*}
G(X,Y)& =c_0X^{m}(\beta Y)^{m_0}{\textstyle\prod\nolimits_{j=1}^{k}} ((\beta Y)^p-\lambda_j(\alpha X)^q)^{m_j} \\
& = \alpha^{-m}F(\alpha X, \beta Y).
\end{align*}
Let us consider $s\in\mathbb{C}^*$ such that $s^{\nu}=\alpha^{-m}$, and let us put $\alpha'=s\alpha$, $\beta'=s^q\beta$. Thus, 
$G(X,Y)= F(\alpha' X, \beta' Y).$
\end{proof}

As we mentioned, Theorem \ref{thm:main} follows as a corollary of Theorem \ref{thm:main2}.

\begin{corollary}[Theorem \ref{thm:main}]
	Let $F$ and $G$ be complex (non-homogeneous) quasihomogeneous polynomials in two complex variables. Then $F$ and $G$ are bi-Lipschitz equivalent iff they are analytically equivalent as function-germs at $0\in\mathbb{C}^2$. 
\end{corollary}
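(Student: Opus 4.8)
The plan is to derive the final corollary (the if-and-only-if statement equivalent to Theorem \ref{thm:main}) from Theorem \ref{thm:main2}, handling the two directions separately and keeping in mind that the Caveat restricts us to genuinely non-homogeneous quasihomogeneous polynomials, so in the $p=1$ case neither $F$ nor $G$ is of the exceptional form $cX^m(Y-\lambda X^q)^n$.

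For the direction ``analytically equivalent $\Rightarrow$ bi-Lipschitz equivalent,'' this is immediate and essentially formal: every $\Phi\in\operatorname*{Diff}(\mathbb{C}^2,0)$ is in particular a germ of a bi-Lipschitz homeomorphism, since a $C^1$ diffeomorphism with nonsingular derivative at $0$ is bi-Lipschitz on a small enough neighborhood of $0$ (bound the norms of the Jacobian and its inverse on a compact neighborhood). Hence $\operatorname*{Diff}(\mathbb{C}^2,0)\subset\operatorname*{BiLip}(\mathbb{C}^2,0)$, and $F=G\circ\Phi$ with $\Phi$ bi-analytic already exhibits $F$ and $G$ as bi-Lipschitz equivalent. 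I would state this in one sentence.

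For the converse, ``bi-Lipschitz equivalent $\Rightarrow$ analytically equivalent,'' I would simply invoke Theorem \ref{thm:main2}. In case $p=1$, it gives $\alpha',\beta'\in\mathbb{C}^*$ and $\gamma'\in\mathbb{C}$ with $G(X,Y)=F(\alpha' X,\beta' Y-\gamma' X^q)$; the map $\Phi(X,Y)=(\alpha' X,\beta' Y-\gamma' X^q)$ is a polynomial automorphism of $\mathbb{C}^2$ fixing the origin, with $D\Phi(0)=\begin{pmatrix}\alpha'&0\\0&\beta'\end{pmatrix}$ invertible, hence $\Phi\in\operatorname*{Diff}(\mathbb{C}^2,0)$ and $G=F\circ\Phi$, so $F$ and $G$ are analytically equivalent. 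In case $p>1$, Theorem \ref{thm:main2} gives $\alpha,\beta\in\mathbb{C}^*$ with $F(X,Y)=G(\alpha X,\beta Y)$, and the linear map $(X,Y)\mapsto(\alpha X,\beta Y)$ is a bi-analytic germ at $0$, so again $F$ and $G$ are analytically equivalent.

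The only point requiring a little care — and the closest thing to an obstacle — is making sure the statements of the two theorems match up: Theorem \ref{thm:main} as stated in the introduction fixes common weights $\varpi=(p,q)$ for $F$ and $G$, whereas the corollary here just says ``two-variable quasihomogeneous polynomials.'' I would note that if $F$ and $G$ are bi-Lipschitz equivalent quasihomogeneous polynomials then they must in fact share the same weight vector: the weights are recoverable from the geometry of the zero set near $0$ (for instance from the contact orders / tangent cone data of the branches, which are bi-Lipschitz invariants, cf. the order-of-vanishing and Lipschitz-invariance references \cite{P,RT} already used in Corollaries \ref{cor:multiplicity_invariance1} and \ref{cor:multiplicity_invariance2}), so there is no loss of generality in assuming $F$ and $G$ have the same $\varpi$. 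With that remark in place, everything reduces cleanly to Theorem \ref{thm:main2}, and the rest is the routine verification above that the explicit substitutions produced there are germs of bi-analytic diffeomorphisms.
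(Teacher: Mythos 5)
Your proposal is correct and follows essentially the same route as the paper: the corollary is obtained by noting that the explicit substitutions $(X,Y)\mapsto(\alpha' X,\beta' Y-\gamma' X^q)$, resp. $(X,Y)\mapsto(\alpha X,\beta Y)$, produced by Theorem \ref{thm:main2} are germs of bi-analytic diffeomorphisms at $0$, the reverse implication being immediate since every germ in $\operatorname*{Diff}(\mathbb{C}^2,0)$ is bi-Lipschitz. Your additional remark about matching the weight vectors is extra care the paper leaves implicit, since the corollary is intended as a restatement of Theorem \ref{thm:main}, where the common weights $\varpi=(p,q)$ are part of the hypothesis.
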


\end{document}